\newtheorem{theorem}{\bf Theorem}
\newtheorem{remark}{\bf Remark}
\newtheorem{definition}{\bf Definition}
\newtheorem{claim}{\bf Claim}
\newtheorem{lemma}{\bf Lemma}
\def\QED{~\rule[-1pt]{5pt}{5pt}\par\medskip}
\newenvironment{proof}{{\bf Proof: \ }}{ \hfill \QED}
\DeclareMathOperator*{\argmin}{arg\,min}
\newcommand*{\LONGVERSION}{}
\newcommand{\upd}{\color{black}}
\newcommand{\beq}{\begin{equation}}
\newcommand{\eeq}{\end{equation}}
\newcommand{\beqa}{\begin{eqnarray}}
\newcommand{\eeqa}{\end{eqnarray}}
\newcommand{\paren}[1]{\left(#1\right)}
\newcommand{\abs}[1]{\left|#1\right|} 
\newcommand{\I}[1]{\ensuremath{\mathsf{1}{\left\{#1\right\}}}} 
\newcommand{\PRP}[1]{\ensuremath{\mathsf{Pr}\left(#1\right)}} 
\newcommand{\ES}[1]{\ensuremath{\mathbb{E}\left[#1 \right]}} 
\newcommand{\logp}[1]{\ensuremath{\log\paren{#1}}}
\begin{document}
%
\title{Linearly Solvable \\ Mean-Field Traffic  Routing Games$^*$}
%

%
%

\author{
 \and Takashi Tanaka$^{1}$ \hspace{3ex} \and Ehsan Nekouei$^{2}$  \hspace{3ex} \and Ali Reza Pedram$^{3}$ \hspace{3ex} \and 
Karl Henrik Johansson$^{4}$ 
\thanks{
$^{*}$ A preliminary version of this work has been presented at \cite{tanaka2018linearly}. 
\ifdefined\LONGVERSION
\else
In the interest of page limitations, proofs of technical results in this paper are partly deferred to  \cite{arXivversion}.
\fi
}
\thanks{
$^{1, 3}$University of Texas at Austin, TX, USA.
        {\tt\small \{ttanaka, apedram\}@utexas.edu}. 
$^{2}$City University of Hong Kong, Kowlong Tong, Hong Kong.
        {\tt\small ekouei@cityu.edu.hk.}
$^{4}$KTH Royal Institute of Technology, Stockholm, Sweden.
        {\tt\small \{kallej\}@kth.se}. 
}
}

\maketitle

\begin{abstract}
We consider a dynamic traffic routing game over an urban road network involving a large number of drivers in which each driver selecting a particular route is subject to a penalty that is affine in the logarithm of the number of drivers selecting the same route. 
We show that the mean-field approximation of such a game leads to the so-called linearly solvable Markov decision process, implying that its mean-field equilibrium (MFE) can be found simply by solving a finite-dimensional linear system backward in time.
Based on this backward-only characterization, it is further shown that the obtained MFE has the notable property of strong time-consistency. A connection between the obtained MFE and a particular class of fictitious play is also discussed.
\end{abstract}
%
%
\section{Introduction}

The mean-field game (MFG) theory, introduced by the authors of   \cite{caines2013mean} and \cite{lasry2007mean} almost concurrently, provides a powerful framework to study stochastic dynamic games where 
(i) the number of players involved in the game is large, (ii) each individual player's impact on the network is infinitesimal, and (iii) players' identities are indistinguishable. 
The central idea of the MFG theory is to approximate, in an appropriate sense, the original large-population game problem by a single-player optimal control problem, in which individual player's best response to the mean field (average behavior of the population) is analyzed.
Typically, the solution to the latter problem is characterized by a pair of  backward Hamilton-Jacobi-Bellman (HJB) and forward Fokker-Planck-Kolmogorov (FPK) equations; the HJB equation guarantees player-by-player optimality, while the FPK equation guarantees time consistency of the solution.
The coupled HJB-FPK systems, as well as alternative mathematical characterizations (e.g., McKean-Vlasov systems), have been studied extensively \cite{lasry2007mean,achdou2010mean,carmona2013probabilistic}.

There has been a recent growth in the literature on MFGs and its applications.
MFGs under Linear Quadratic (LQ) \cite{Huang2010,HLW2016,MT2017} and more general settings \cite{Huang2012, CLM2015} are both extensively explored.
MFGs with a major agent and a large number of minor agents are studied \cite{Huang2012} and applied to design decentralized security defense decisions in a mobile ad hoc network \cite{WYTH2014}. MFGs with multiple classes of players are investigated in \cite{TH2011}. The authors of \cite{BTB2016} studied the existence of robust (minimax) equilibrium in a class of stochastic dynamic games.
In \cite{ZTB2011}, the authors analyzed the equilibrium of a hybrid stochastic game in which the dynamics of agents are affected by continuous disturbance as well as random switching signals.
Risk-sensitive MFGs were considered in  \cite{tembine2014risk}.
While continuous-time continuous-state models are commonly used in the references above,  
\cite{jovanovic1988anonymous, weintraub2006oblivious,gomes2010discrete,salhab2018mean,saldi2018markov} have considered the MFG in discrete-time and/or discrete-state regime.
The issues of time inconsistency in MFG and mean-field type optimal control problems are discussed in \cite{bensoussan2013linear,djehiche2016characterization,cisse2014cooperative}.

While substantial progress has been made on the MFG literature in recent years, there has been a long history of mean-field-like approaches to large-population games in the transportation research literature \cite{sheffi1984urban}. 
A well-known consequence of a mean-field-like analysis of the traffic user equilibrium is the Wardrop's first principle  \cite{wardrop1952some,correa2011wardrop}, which provides the following characterization of the traffic condition at an equilibrium: \emph{journey times on all the routes actually
used are equal, and less than those which
would be experienced by a single vehicle on
any unused route}. This result, as well as a generalized concept known as \emph{stochastic user equilibrium} (SUE) \cite{daganzo1977stochastic}, has played a major role in the transportation research, including the convergence analysis of users' day-to-day routing policy adjustment process 
\cite{fisk1980some,dial1971probabilistic,powell1982convergence,sheffi1982algorithm,liu2009method}.
However, currently only a limited number of results are available connecting the transportation research and recent progress in the MFG theory. The work \cite{salhab2018mean} considers discrete-time discrete-state mean-field route choice games.
In \cite{CLM2015}, the authors modeled the interaction between drivers on a straight road as a non-cooperative game and characterized its MFE.
In \cite{BZP2017}, the authors considered a continuous-time Markov chain to model the aggregated behavior of drivers on a traffic network. {\upd A Markovian framework for traffic assignment problems is introduced in \cite{baillon2008markovian}, which is similar to the problem formulation adopted in this paper. A connection between large-population Markov Decision Processes (MDPs) and MFGs has been discussed in a recent work \cite{yu2019primal}.} MFG has been applied to pedestrian crowd dynamics modeling in \cite{lachapelle2011mean,dogbe2010modeling}.

In this paper, we apply the MFG theory to study the strategic behavior of infinitesimal drivers traveling over an urban traffic network. Specifically, we consider a discrete-time dynamic stochastic game wherein, at each intersection, each driver randomly selects one of the outgoing links as her next destination according to a randomized policy. We assume that individual drivers' dynamics are decoupled from each other, while their cost functions are coupled.
In particular, we assume that the cost function for each driver is congestion-dependent, and is affine in the logarithm of the number of drivers taking the same route. 
We regard the congestion-dependent term in the cost function as an incentive mechanism (toll charge) imposed by the Traffic System Operator (TSO). 
Although the assumed structure of cost functionals is restrictive, the purpose of this paper is to show that the considered class of MFGs exhibits a \emph{linearly solvable} nature, and requires somewhat different treatments from the standard MFG formalism.
We emphasize that the computational advantages that follow from this special property are notable both from the existing MFG and the transportation research perspectives. 
Contributions of this paper are summarized as follows:
\begin{enumerate}[leftmargin=*]
\item Linear solvability: We prove that the MFE of the game described above is given by the solution to a linearly solvable MDP \cite{todorov2007linearly}, meaning that it can be computed by performing a sequence of matrix multiplications backward in time \emph{only once}, without any need of forward-in-time computations. 
This offers a tremendous computational advantage over the conventional characterization of the MFE where there is a need to solve a forward-backward HJB-FPK system, which is often a non-trivial task \cite{achdou2010mean}.
\item Strong time-consistency: Due to the backward-only characterization, the MFE in our setting is shown to be \emph{strongly time-consistent}  \cite{basar1999dynamic}, a stronger property than what follows from the standard forward-backward characterization of MFEs.
\item MFE and fictitious play: With an aid of numerical simulation, we show that the derived MFE can be interpreted as a limit point of the belief path of the \emph{fictitious play} process \cite{monderer1996fictitious} in a scenario where the traffic routing game is repeated.
\end{enumerate}
The rest of the paper is organized as follows: The traffic routing game is set up in Section~\ref{secprob} and its mean field approximation is discussed in Section~\ref{secmfapprox}. The linearly solvable MDPs are reviewed in Section~\ref{secoptcontrol}, which is used to derive the MFE of the traffic routing game in Section~\ref{secmfe}. 
Time consistency of the derived MFE is studied in Section~\ref{sectc}. A connection between MFE and fictitious play is investigated in Section~\ref{secfictitious}. Numerical studies are summarized in Section~\ref{secsimulation} before we conclude in Section~\ref{secconclude}.

\section{Problem Formulation}
\label{secprob}

The traffic game studied in this paper is formulated as an $N$-player, $T$-stage dynamic game. 
Denote by $\mathcal{N}=\{1, 2, \cdots , N\}$ the set of players (drivers) and by $\mathcal{T}=\{0, 1, \cdots , T-1\}$ the set of time steps at which players make decisions.

\subsection{Traffic graph}
The \emph{traffic graph} is a directed graph $\mathcal{G}=(\mathcal{V},\mathcal{E})$, where $\mathcal{V}=\{1, 2, ... , V\}$ is the set of nodes (intersections) and $\mathcal{E}=\{1, 2, ... , E \}$ is the set of directed edges (links). For each $i\in \mathcal{V}$, denote by $\mathcal{V}(i) \subseteq \mathcal{V}$ the set of intersections to which there is a directed link from the intersection $i$.   
At any given time step $t\in \mathcal{T}$, each player is located at an intersection.
The node  at which the $n$-th player is located at time step $t$ is denoted by $i_{n,t}\in \mathcal{V}$.  
At every time step, player $n$ at location $i_{n,t}$ selects her next destination $j_{n,t}\in \mathcal{V}(i_{n,t})$.
By selecting $j_{n,t}$ at time $t$, the player $n$ moves to the node $j_{n,t}$ at time $t+1$ deterministically (i.e., $i_{n,t+1}=j_{n,t}$).

\subsection{Routing policy}

At every time step $t$, each player selects her next destination according to a randomized routing policy.
Let $\Delta^J$ be the $J$-dimensional probability simplex, and 
$Q_{n,t}^i=\{Q_{n,t}^{ij}\}_{j\in \mathcal{V}(i)}\in \Delta^{|\mathcal{V}(i)|-1}$ be the probability distribution according to which player $n$ at intersection $i$ selects the next destination $j \in \mathcal{V}(i)$.
We consider the collection $Q_{n,t}=\{Q_{n,t}^i\}_{i\in \mathcal{V}}$ of such probability distributions as the \emph{policy} of player $n$ at time $t$.
For each $n\in \mathcal{N}$ and $t\in \mathcal{T}$, notice that  $Q_{n,t}\in \mathcal{Q}$, where
\[
\mathcal{Q}=\left\{\{Q^i\}_{i\in\mathcal{V}}: Q^i \in \Delta^{|\mathcal{V}(i)|-1} \;\; \forall i\in\mathcal{V} \right\}
\]
is the space of admissible policies. 
Suppose that the initial locations of players $\{i_{n,0}\}_{n\in\mathcal{N}}$ are independent and identically distributed random variables with
 $P_{n,0}=P_{0}\in\Delta^{|\mathcal{V}|-1}$.
Note that if the policy $\{Q_{n,t}\}_{t\in\mathcal{T}}$ of player $n$ is fixed, then the probability distribution $P_{n,t}=\{P_{n,t}^i\}_{i\in \mathcal{V}}$ of her location at time $t$ is  computed recursively by
\begin{equation}
\label{eqdyn}
P_{n,t+1}^j=\sum_i P_{n,t}^i Q_{n,t}^{ij} \;\; \forall t\in\mathcal{T}, j \in\mathcal{V}.
\end{equation}
If $(i_{n,t}, j_{n,t})$ is the location-action pair of player $n$ at time $t$, it has the joint distribution $P_{n,t}^i Q_{n,t}^{ij}$.
We assume that location-action pairs $(i_{n,t}, j_{n,t})$ and $(i_{m,t}, j_{m,t})$ for two different players $m\neq n$ are drawn independently under individual policies $\{Q_{n,t}\}_{t\in\mathcal{T}}$ and $\{Q_{m,t}\}_{t\in\mathcal{T}}$.
With a slight abuse of notation, we sometimes write $Q_n:=\{Q_{n,t}\}_{t\in\mathcal{T}}$ for simplicity.

\subsection{Cost functional}
We assume that, at each time step, the cost functional for each player has two components as specified below:
\subsubsection{Travel cost}
For each $i\in\mathcal{V}$, $j\in\mathcal{V}(i)$ and $t\in\mathcal{T}$, let $C_t^{ij}$ be a given constant representing the cost (e.g., fuel cost) for every player selecting $j$ at location $i$ at time $t$. 
\subsubsection{Tax cost}
We assume that players are also subject to individual and time-varying tax penalties calculated by the TSO. 
{\upd The tax charged to player $n$ at time step $t$ depends not only on her own location-action pair at $t$, but also on the behavior of the entire population at that time step.}
 Specifically, we consider the log-population tax mechanism, where the tax charged to player $n$ taking action $j$ at location $i$ at time $t$ is
\begin{equation}
\label{eqtax}
\pi_{N,t,n}^{ij}=  \alpha  \left( \log \frac{K_{N,t}^{ij}}{K_{N,t}^i }-\log R_t^{ij} \right).
\end{equation}
Here, $\alpha>0$ is a fixed constant characterizing the ``aggressiveness'' of the tax mechanism. 
In \eqref{eqtax}, $K_{N,t}^i$ is the number of players  (including player $n$) who are located at the intersection $i$ at time $t$. Likewise, $K_{N,t}^{ij}$ is the number of players (including player $n$) who takes the action $j$ at the intersection $i$ at time $t$.
The parameters $R_t^{ij}> 0$ are fixed constants satisfying
$\sum_j R_t^{ij}=1$ for all $i$. We interpret $R_t^{ij}$ as the ``reference'' routing policy specified by the TSO in advance. Notice that \eqref{eqtax} indicates that agent $n$ receives a positive reward by taking action $j$ at location $i$ at time $t$ if $K_{N,t}^{ij}/K_{N,t}^{i} < R_t^{ij}$ {\upd (i.e., the realization of the traffic flow is below the designated congestion level)}, while she is penalized by doing so if $K_{N,t}^{ij}/K_{N,t}^{i} > R_t^{ij}$. Since $K_{N,t}^{i}$ and $K_{N,t}^{ij}$ are random variables, $\pi_{N,t,n}^{ij}$ is also a random variable.
We assume that the TSO is able to observe $K_{N,t}^{i}$ and $K_{N,t}^{ij}$ at every time step so that $\pi_{N,t,n}^{ij}$ is computable.\footnote{Whenever $\pi_{N,t,n}^{ij}$ is computed, we have both $K_{N,t}^{ij}\geq 1$ and $K_{N,t}^{i}\geq 1$ since at least player $n$ herself is counted. Hence \eqref{eqtax} is well-defined.}
In what follows, we assume that each player is risk neutral.
{\upd That is, each player is interested in choosing a policy that minimizes the expected sum of travel and tax costs incurred over the planning horizon $\mathcal{T}$.
For player $n$ whose location-action pair at time step $t$ is $(i,j)$,  the expected tax cost incurred at that time step can be expressed as 
\begin{equation}
\label{eqexp_pi_general}
\Pi_{N,n,t}^{ij}\triangleq \mathbb{E}\left[\pi_{N,n,t}^{ij} \mid  i_{n,t}=i, j_{n,t}=j \right].
\end{equation}
\ifdefined\LONGVERSION
As we detail in equation \eqref{eqtijgeneral} in Appendix~\ref{appbinomial}, for each location-action pair $(i,j)$, $\Pi_{N,n,t}^{ij}$ can be expressed in terms of $Q_{-n}\triangleq\{Q_m\}_{m\neq n}$.
\else
As we detail in \cite[Appendix A, equation (22)]{arXivversion}, for each location-action pair $(i,j)$, $\Pi_{N,n,t}^{ij}$ can be expressed in terms of $Q_{-n}\triangleq\{Q_m\}_{m\neq n}$.
\fi
The fact that $\Pi_{N,n,t}^{ij}$ does not depend on player $n$'s own policy will be used to analyze the optimal control problem \eqref{eqprob_n} below.\footnote{Although the value of $\Pi_{N,n,t}^{ij}$ for each $(i,j)$ cannot be altered by player $n$'s policy, she can minimize the total cost by an appropriate route choice (e.g., by avoiding links with high toll fees).}}


\subsection{Traffic routing game}
\label{sectrafficgame}
Overall, the cost functional to be minimized by the $n$-th player in the considered  game is given by
\begin{equation}
J\left(Q_n, Q_{-n}\right)=\sum_{t=0}^{T-1} \sum_{i,j} P_{n,t}^i Q_{n,t}^{ij}\left(C_t^{ij}+\Pi_{N,n,t}^{ij}\right). \label{eqobjn}
\end{equation}
Notice that this quantity depends not only on the $n$-th player's own policy $Q_n$ but also on the other players' policies $Q_{-n}$ through the term $\Pi_{N,n,t}^{ij}$. 
Equation \eqref{eqobjn} defines an $N$-player dynamic game, which we call the \emph{traffic routing game} hereafter. 
We introduce the following equilibrium concepts.
\begin{definition}
The $N$-tuple of strategies $\{Q_{n}^*\}_{n\in\mathcal{N}}$ is said to be a \emph{Nash equilibrium} if the inequality $J\left(Q_n, Q_{-n}^*\right) \geq J\left(Q_n^*, Q_{-n}^*\right)$ holds for each $n\in \mathcal{N}$ and $Q_n$.
\end{definition}
\begin{definition}
The $N$-tuple of strategies $\{Q_{n}^*\}_{n\in\mathcal{N}}$ is said to be \emph{symmetric} if $Q_1^*=Q_2^*=\cdots=Q_N^*$.
\end{definition}
\begin{remark}
The $N$-player game described above is a \emph{symmetric game} in the sense of \cite{cheng2004notes}. Thus, \cite[Theorem~3]{cheng2004notes} is applicable to show that it has a symmetric Nash equilibrium.
\end{remark}
\begin{remark}
We assume that players are able to compute a Nash equilibrium strategy $\{Q_{n}^*\}_{n\in\mathcal{N}}$ prior to the execution of the game based on the public knowledge $\mathcal{G}, \alpha, \mathcal{N}, \mathcal{T}, R_t^{ij}, C_t^{ij}$ and $P_{0}$. Often the case, it is favorable that a Nash equilibrium is \emph{time-consistent} in that no player is given an incentive to deviate from the precomputed equilibrium routing policy after observing
real-time data (such as  $K_{N,t}^{i}$ and $K_{N,t}^{ij}$).
In Section~\ref{sectc}, we discuss a notable time consistency property of an equilibrium of the traffic routing game formulated above in the large-population limit $N\rightarrow \infty$.
\end{remark}

\section{Mean Field Approximation}
\label{secmfapprox}

In the remainder of this paper, we are concerned with the large-population limit $N\rightarrow \infty$ of the traffic routing game.
\begin{definition}
A set of strategies $\{Q_{n}^*\}_{n\in\mathcal{N}}$ is said to be an \emph{MFE} if the following conditions are satisfied.
\begin{itemize}
\item[(a)] It is symmetric, i.e., $Q_1^*=Q_2^*=\cdots=Q_N^*$.
\item[(b)] There exists a sequence $\epsilon_N$ satisfying $\epsilon_N \searrow 0$ as $N\rightarrow \infty$ such that for each $n \in \mathcal{N}=\{1, 2, ... , N\}$ and $Q_n$, the inequality $J(Q_n,Q_{-n}^*)+\epsilon_N \geq J(Q_n^*,Q_{-n}^*)$ holds.
\end{itemize}
\end{definition}
{\upd 
Now, we derive a condition that an MFE must satisfy by analyzing player $n$'s  best response when all other players adopt a homogeneous routing policy $Q^*=\{Q_t^*\}_{t\in\mathcal{T}}$. Since $Q^*$ is adopted by all players other than $n$,  the probability that a specific player $m(\neq n)$ is located at $i$ is given by $P_t^{i*}$, where $P^*=\{P_t^*\}_{t\in\mathcal{T}}$ is computed recursively by
 \[
P_{t+1}^{j*}=\sum_i P_t^{i*} Q_t^{ij*} \;\; \forall  j\in\mathcal{V}.
\]
Player $n$'s best response  is characterized by the solution to the following optimal control problem:
\begin{equation}
\label{eqprob_n}
\min_{\{Q_{t}\}_{t\in\mathcal{T}}} \sum_{t=0}^{T-1} \sum_{i,j} P_{t}^i Q_{t}^{ij}\left(C_t^{ij}+\Pi_{N,n,t}^{ij}\right).
\end{equation}
Here, we note that $\Pi_{N,n,t}^{ij}$ is fully determined by the homogeneous policy $Q^*$ adopted by all other players.
\ifdefined\LONGVERSION
(The detail is shown in equation \eqref{eqtjieq} in Appendix~\ref{appbinomial}.)
\else
(The detail is shown in \cite[Appendix A, equation (23)]{arXivversion}.)
\fi
In \eqref{eqprob_n}, we wrote $P_{t}$ and $Q_{t}$ in place of $P_{n,t}$ and $Q_{n,t}$ to simplify the notation.

To analyze player $n$'s best response when $N\rightarrow \infty$, we compute the quantity $\lim_{N\rightarrow \infty} \Pi_{N,n,t}^{ij}$ as follows:}
\begin{lemma}
\label{lemlimpi}
Let $\Pi_{N,n,t}^{ij}$ be defined by \eqref{eqexp_pi_general}. If $Q_{m,t}=Q_t^*$ for all $m\neq n$ and $P_t^{i*}Q_t^{ij*}>0$, then
\[
\lim_{N\rightarrow \infty} \Pi_{N,n,t}^{ij}=\alpha \log \frac{Q_t^{ij*}}{R_t^{ij}}.  
\]
\end{lemma}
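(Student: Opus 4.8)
The plan is to reduce the statement to a law-of-large-numbers computation for two binomial random variables, and then to promote almost-sure convergence to convergence in expectation by a concentration argument.

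First I would make the conditional distributions explicit. Condition on player $n$ occupying the location-action pair $(i,j)$. Because the location-action pairs of the other $N-1$ players are drawn independently under the common policy $Q^*$, and player $n$ herself contributes a deterministic count of one to each tally, we have $K_{N,t}^i = 1 + B_i$ and $K_{N,t}^{ij} = 1 + B_{ij}$, where $B_i \sim \mathrm{Binomial}(N-1, P_t^{i*})$ and $B_{ij} \sim \mathrm{Binomial}(N-1, P_t^{i*}Q_t^{ij*})$. Since $\log R_t^{ij}$ is deterministic, we may write $\Pi_{N,n,t}^{ij} = \alpha\,\mathbb{E}\!\left[\log\!\big(K_{N,t}^{ij}/K_{N,t}^i\big)\right] - \alpha\log R_t^{ij}$, so the task reduces to showing that $\mathbb{E}\!\left[\log\!\big(K_{N,t}^{ij}/K_{N,t}^i\big)\right] \to \log Q_t^{ij*}$.

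Next I would establish the pointwise limit. Dividing numerator and denominator by $N$, the strong law of large numbers gives $K_{N,t}^i/N \to P_t^{i*}$ and $K_{N,t}^{ij}/N \to P_t^{i*}Q_t^{ij*}$ almost surely. Since the hypothesis $P_t^{i*}Q_t^{ij*}>0$ forces $P_t^{i*}>0$, the ratio $K_{N,t}^{ij}/K_{N,t}^i$ converges almost surely to $Q_t^{ij*}$, and hence $\log\!\big(K_{N,t}^{ij}/K_{N,t}^i\big) \to \log Q_t^{ij*}$ almost surely.

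The main obstacle is upgrading this almost-sure convergence to convergence of the expectations, because the integrand is only bounded below by $-\log N$, a bound that grows with $N$, so bounded convergence does not apply directly. To handle this I would introduce, for a fixed small $\eta>0$ with $\eta < P_t^{i*}Q_t^{ij*}$, the high-probability event $G_N$ on which both $K_{N,t}^i/N$ and $K_{N,t}^{ij}/N$ lie within $\eta$ of their respective means. On $G_N$ the log-ratio is uniformly bounded and converges to $\log Q_t^{ij*}$, so dominated convergence applies to $\mathbb{E}\!\left[\log\!\big(K_{N,t}^{ij}/K_{N,t}^i\big)\mathbf{1}_{G_N}\right]$. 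On the complement $G_N^c$ I would use the deterministic bound $\big|\log\!\big(K_{N,t}^{ij}/K_{N,t}^i\big)\big| \leq \log N$, which holds because $1 \leq K_{N,t}^{ij} \leq K_{N,t}^i \leq N$, together with a Chernoff estimate $\mathbb{P}(G_N^c) \leq C e^{-cN}$ for the binomial tails, so that the residual term is at most $\log N \cdot C e^{-cN} \to 0$. Combining the two pieces yields $\mathbb{E}\!\left[\log\!\big(K_{N,t}^{ij}/K_{N,t}^i\big)\right] \to \log Q_t^{ij*}$, and after scaling by $\alpha$ and subtracting $\alpha\log R_t^{ij}$ this is exactly the claimed limit.
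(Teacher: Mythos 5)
Your argument is correct, and its skeleton is the same as the paper's: both proofs reduce the statement to computing $\lim_N \mathbb{E}\bigl[\log(K_{N,t}^{ij}/K_{N,t}^{i})\bigr]$ for binomial-type counts, both rely on Hoeffding-type concentration to isolate a high-probability event, and both kill the contribution of the bad event by pairing the deterministic bound $|\log(\cdot)|\le \log N$ with an exponentially small tail probability. The one structural difference is in how the two-sided limit of the expectation is obtained: the paper splits $\log(K^{ij}/K^{i})$ into $\log\bigl((1+K^{ij}_{N,-n,t})/N\bigr)-\log\bigl((1+K^{i}_{N,-n,t})/N\bigr)$ and treats each term asymmetrically, using Jensen's inequality for the $\limsup$ and a Fatou-plus-truncation argument for the $\liminf$, whereas you keep the ratio intact and observe that on the good event $G_N$ the log-ratio is uniformly bounded on both sides, so a single dominated-convergence step suffices. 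Your version is slightly cleaner for that reason (no Jensen/Fatou asymmetry and only one truncation), while the paper's term-by-term treatment has the minor advantage of directly yielding the individual limits $\mathbb{E}\bigl[\log\bigl((1+K^{ij}_{N,-n,t})/N\bigr)\bigr]\to\log P_t^{i*}Q_t^{ij*}$ and $\mathbb{E}\bigl[\log\bigl((1+K^{i}_{N,-n,t})/N\bigr)\bigr]\to\log P_t^{i*}$ as reusable byproducts. Either way the hypothesis $P_t^{i*}Q_t^{ij*}>0$ enters exactly where you use it, to keep the limit of the ratio bounded away from zero.
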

\begin{proof}
\ifdefined\LONGVERSION
Appendix~\ref{app1}.
\else
\cite[Appendix B]{arXivversion}.
\fi
\end{proof}
Intuitively, Lemma~\ref{lemlimpi} shows that the optimal control problem \eqref{eqprob_n} when $N$ is large is ``close to'' the optimal control problem:
\begin{equation}
\min_{\{Q_{t}\}_{t\in\mathcal{T}}}  \sum_{t=0}^{T-1} \sum_{i,j} \!P_{t}^i Q_{t}^{ij}\!\left(\!C_t^{ij}\!+\!\alpha\log\frac{Q_t^{ij*}}{R_t^{ij}} \right). \label{eqprob_n_inf}
\end{equation}
In order for the policy $Q^*$ to constitute an MFE, the policy $Q^*$ itself needs to be the best response by player $n$. In particular, $Q^*$ must solve the optimal control problem \eqref{eqprob_n_inf}. That is, the following {\upd fixed point condition} must be satisfied:
\begin{equation}
Q^*\in \argmin\limits_{\{Q_{t}\}_{t\in\mathcal{T}}}  \sum_{t=0}^{T-1} \sum_{i,j} \!P_{t}^i Q_{t}^{ij}\!\left(\!C_t^{ij}\!+\!\alpha\log\frac{Q_t^{ij*}}{R_t^{ij}} \right). \label{eqconsistency}
\end{equation}
In the next two sections, we show that the condition \eqref{eqconsistency} is closely related to the class of optimal control problems known as \emph{linearly-solvable MDPs} \cite{todorov2007linearly,dvijotham2011unified}. Based on this observation, we show that an MFE can be computed efficiently.

\section{Linearly Solvable MDPs}
\label{secoptcontrol}
In this section, we review linearly-solvable MDPs  \cite{todorov2007linearly,dvijotham2011unified} and their solution algorithms. For each $t\in\mathcal{T}$, let $P_t$ be the probability distribution over $\mathcal{V}$ that evolves according to
\begin{equation}
\label{eqpq2}
P_{t+1}^j=\sum_i P_t^i Q_t^{ij} \;\; \forall  j\in\mathcal{V}
\end{equation}
with the initial state $P_0$.
We assume $C_t^{ij}$, $R_t^{ij}$ for each $t\in\mathcal{T}, i\in\mathcal{V}, j\in\mathcal{V}$ and $\alpha$ are given positive constants.
Consider the $T$-step optimal control problem:
\begin{equation}
\min_{\{Q_t\}_{t\in\mathcal{T}}}\sum_{t=0}^{T-1}\sum_{i,j}P_t^i Q_t^{ij}\left(C_t^{ij}+\alpha \log\frac{ Q_t^{ij}}{R_t^{ij}} \right).
\label{eqoptcontrol2}
\end{equation}
The logarithmic term in \eqref{eqoptcontrol2} can be written as the Kullback--Leibler (KL) divergence from the reference policy $R_t^{ij}$ to the selected policy $Q_t^{ij}$. For this reason \eqref{eqoptcontrol2} is also known as the \emph{KL control} problem \cite{theodorou2010generalized}. Notice the similarity and difference between the optimal control problems \eqref{eqprob_n_inf} and \eqref{eqoptcontrol2}; in \eqref{eqprob_n_inf} the logarithmic term is a fixed constant ($Q^*$ is given), while in \eqref{eqoptcontrol2} the logarithmic term depends on the chosen policy $Q$.
To solve  \eqref{eqoptcontrol2} by backward dynamic programming, for each $t\in\mathcal{T}$, introduce the value function:
\[
V_{t}(P_{t})  \triangleq 
\min_{\{Q_{\tau}\}_{\tau=t}^{T-1}} \sum_{\tau=t}^{T-1} \sum_{i,j} P_{\tau}^i Q_{\tau}^{ij}\!\left(C_\tau^{ij}+\alpha\log\frac{Q_\tau^{ij}}{R_\tau^{ij}} \right)
\]
and the associated Bellman equation
\begin{equation}
{\upd
V_{t}(P_{t}) =\min_{Q_t} \Big\{ \sum_{i,j} P_t^i Q_t^{ij}\!\left(C_t^{ij}+\alpha\log\frac{Q_t^{ij}}{R_t^{ij}} \right)+V_{t+1}(P_{t+1}) \Big\}} \label{eqbellmankl}
\end{equation}
with the terminal condition $V_T(\cdot)=0$.
The next theorem states that the Bellman equation \eqref{eqbellmankl} can be linearized by a change of variables (the Cole-Hopf transformation), and thus the
optimal control problem \eqref{eqoptcontrol2} is reduced to solving a linear system \cite{todorov2007linearly}.
\begin{theorem}
\label{theo1}
Let $\{\phi_t\}_{t\in\mathcal{T}}$ be the sequence of $V$-dimensional vectors defined by the backward recursion 
\begin{equation}
\label{eqphi}
\phi_t^i=\sum_j R_t^{ij} \exp \left(-\frac{C_t^{ij}}{\alpha}\right)\phi_{t+1}^j \;\; \forall i\in\mathcal{V}
\end{equation}
with the terminal condition $\phi_T^i=1 \; \forall i$.
Then, for each $t=0, 1, \cdots, T$ and $P_t$, the value function can be written as
\begin{equation}
V_t(P_t)=-\alpha\sum_i P_t^i \log \phi_t^i. \label{eqvt}
\end{equation}
Moreover, the optimal policy for \eqref{eqoptcontrol2} is given by
\begin{equation}
Q_t^{ij*}=\frac{\phi_{t+1}^j}{\phi_t^i}R_t^{ij}\exp\left(-\frac{C_t^{ij}}{\alpha}\right). \label{eqoptq}
\end{equation}
\end{theorem}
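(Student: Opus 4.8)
The plan is to establish both \eqref{eqvt} and \eqref{eqoptq} simultaneously by backward induction on $t$, driven by the Bellman equation \eqref{eqbellmankl}. For the base case $t=T$, the terminal condition $V_T(\cdot)=0$ agrees with \eqref{eqvt} because $\phi_T^i=1$ forces $\log\phi_T^i=0$. For the inductive step I would assume the stage-$(t+1)$ value function has the claimed form $V_{t+1}(P_{t+1})=-\alpha\sum_j P_{t+1}^j\log\phi_{t+1}^j$ and substitute this, together with the transition law $P_{t+1}^j=\sum_i P_t^i Q_t^{ij}$ from \eqref{eqpq2}, into the right-hand side of \eqref{eqbellmankl}. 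After collecting terms, the quantity to be minimized becomes $\sum_i P_t^i\sum_j Q_t^{ij}\big(C_t^{ij}+\alpha\log\frac{Q_t^{ij}}{R_t^{ij}\phi_{t+1}^j}\big)$, which decouples across the nodes $i$: since $P_t^i\ge 0$, it suffices to minimize the inner sum over each $Q_t^i=\{Q_t^{ij}\}_{j\in\mathcal{V}(i)}$ independently, subject to the simplex constraint $\sum_j Q_t^{ij}=1$, $Q_t^{ij}\ge 0$.

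The crux is this per-node minimization. I would introduce the unnormalized nonnegative weights $w_t^{ij}\triangleq R_t^{ij}\exp(-C_t^{ij}/\alpha)\,\phi_{t+1}^j$ and observe that $C_t^{ij}+\alpha\log\frac{Q_t^{ij}}{R_t^{ij}\phi_{t+1}^j}=\alpha\log\frac{Q_t^{ij}}{w_t^{ij}}$, so that the inner objective is exactly $\alpha\sum_j Q_t^{ij}\log\frac{Q_t^{ij}}{w_t^{ij}}$. Writing $Z_t^i\triangleq\sum_j w_t^{ij}$ and normalizing to $\bar w_t^{ij}=w_t^{ij}/Z_t^i$, this expression equals $\alpha\,\mathrm{KL}(Q_t^i\,\|\,\bar w_t^i)-\alpha\log Z_t^i$. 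The main (and essentially only nontrivial) ingredient is then Gibbs' inequality: the KL divergence is nonnegative and vanishes precisely when $Q_t^i=\bar w_t^i$. This immediately yields the minimizer $Q_t^{ij*}=w_t^{ij}/Z_t^i$ and the optimal inner value $-\alpha\log Z_t^i$. (The same conclusion follows from a Lagrange-multiplier stationarity computation, but the Gibbs-inequality route certifies global optimality and handles the simplex boundary cleanly.)

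It then remains only to identify $Z_t^i$ with $\phi_t^i$. By definition $Z_t^i=\sum_j R_t^{ij}\exp(-C_t^{ij}/\alpha)\,\phi_{t+1}^j$, which is precisely the backward recursion \eqref{eqphi}; hence $Z_t^i=\phi_t^i$. Substituting this back, the minimizer becomes $Q_t^{ij*}=\frac{\phi_{t+1}^j}{\phi_t^i}R_t^{ij}\exp(-C_t^{ij}/\alpha)$, matching \eqref{eqoptq}, and the optimal inner value $-\alpha\log\phi_t^i$ gives $V_t(P_t)=\sum_i P_t^i(-\alpha\log\phi_t^i)=-\alpha\sum_i P_t^i\log\phi_t^i$, matching \eqref{eqvt}. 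This closes the induction. The one point requiring care is the per-node step, where one should confirm that $\bar w_t^i$ is a genuine probability vector — guaranteed by $R_t^{ij}>0$ and $\phi_{t+1}^j>0$, the latter following inductively from \eqref{eqphi} — so that Gibbs' inequality applies; the decoupling over $i$ and the telescoping identification of $Z_t^i$ with $\phi_t^i$ are then routine.
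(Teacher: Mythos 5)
Your proof is correct and follows the same overall route as the paper's: backward induction on $t$ anchored at $V_T(\cdot)=0$, substitution of the inductive hypothesis into the Bellman equation \eqref{eqbellmankl}, decoupling of the minimization across nodes $i$, and identification of the per-node normalization constant $Z_t^i$ with $\phi_t^i$ via the recursion \eqref{eqphi}. The only point of divergence is how the per-node minimization is solved: the paper introduces $\rho_t^{ij}=C_t^{ij}-\alpha\log\phi_{t+1}^j$, forms a Lagrangian for the constraint $\sum_j Q_t^{ij}=1$, and solves the stationarity condition $\partial L/\partial Q_t^{ij}=0$ for those $i$ with $P_t^i>0$, whereas you rewrite the inner objective as $\alpha\,\mathrm{KL}(Q_t^i\,\|\,\bar w_t^i)-\alpha\log Z_t^i$ and invoke Gibbs' inequality. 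Both computations yield the same minimizer \eqref{eqoptq} and the same optimal value $-\alpha\log\phi_t^i$, but your variant buys a little extra rigor: nonnegativity of the KL divergence certifies \emph{global} optimality over the entire simplex, including its boundary, without appealing to convexity or second-order conditions, which the paper's first-order argument leaves implicit. Your observation that $\phi_{t+1}^j>0$ is needed for $\bar w_t^i$ to be a genuine probability vector is a detail the paper does not spell out, and you justify it correctly by induction from \eqref{eqphi} together with the positivity of $R_t^{ij}$ and $C_t^{ij}$.
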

\begin{proof}
\ifdefined\LONGVERSION
Appendix~\ref{apptheo1}.
\else
\cite[Appendix C]{arXivversion}.
\fi
\end{proof}

We stress that \eqref{eqphi} is linear in $\phi$ and can be computed by matrix multiplications backward in time.

\section{Mean Field Equilibrium}
\label{secmfe}
{\upd
In this section, we investigate the relationship between the optimal control problem \eqref{eqoptcontrol2} and the fixed point condition \eqref{eqconsistency} for an MFE in the traffic routing game. 
To this end, we introduce the value function for the optimal control problem \eqref{eqprob_n_inf}, defined by
\[
\tilde{V}_{t}(P_{t})  \triangleq \min_{\{Q_{\tau}\}_{\tau=t}^{T-1}} \!\sum_{\tau=t}^{T-1} \sum_{i,j} P_{\tau}^i Q_{\tau}^{ij}\!\left(C_\tau^{ij}\!+\!\alpha\log\frac{Q_\tau^{ij*}}{R_\tau^{ij}} \right)
\]
The value function satisfies the Bellman equation:
\begin{equation}
\tilde{V}_{t}(P_{t})= \min_{Q_{t}} \Big\{ \sum_{i,j}P_{t}^i Q_{t}^{ij}\left(C_t^{ij}\!+\alpha\log\frac{Q_t^{ij*}}{R_t^{ij}}\right)+\tilde{V}_{t+1}(P_{t+1}) \Big\} \label{eqbellman3}
\end{equation}
with the terminal condition $\tilde{V}_T(\cdot)=0$.
We emphasize the distinction between $\tilde{V}_{t}(\cdot)$ and $V_{t}(\cdot)$.
As in the previous section, $V_{t}(\cdot)$ is the value function associated with the KL control problem \eqref{eqoptcontrol2}, whereas $\tilde{V}_{t}(\cdot)$ is the value function associated with the optimal control problem \eqref{eqprob_n_inf}. 
Despite this difference, the next lemma shows an intimate connection between $V_{t}(\cdot)$ and $\tilde{V}_{t}(\cdot)$.
In particular, if the parameter $Q^*$ in \eqref{eqprob_n_inf}  is chosen to be the solution to the KL control problem \eqref{eqoptcontrol2}, then the objective function in \eqref{eqprob_n_inf} becomes a constant that does not depend on  the decision variable $\{Q_t\}_{t\in\mathcal{T}}$ (the \emph{equalizer property}\footnote{We note that the equalizer property (the term borrowed from \cite{grunwald2004game}) of the minimizers of free energy functions is well-known in statistical mechanics,  information theory, and robust Bayes estimation theory. } of the optimal KL control policy). 
Moreover, under this circumstance, the value function $\tilde{V}_{t}(\cdot)$  for \eqref{eqprob_n_inf} coincides with the value function  $V_{t}(\cdot)$ for the KL control problem \eqref{eqoptcontrol2}.
\begin{lemma}
\label{lembestresponse}
If $\{Q_t^*\}_{t\in\mathcal{T}}$ in \eqref{eqprob_n_inf} is fixed to be the solution to the KL control problem \eqref{eqoptcontrol2}, then an arbitrary policy $\{Q_{t}\}_{t\in\mathcal{T}}$ with $Q_{t}\in\mathcal{Q}$ is an optimal solution to \eqref{eqprob_n_inf}.
Moreover, for each $t\in\mathcal{T}$ and $P_{t}$, we have 
\begin{equation}
\label{eqlembestresponse}
\tilde{V}_{t}(P_{t})=-\alpha \sum\nolimits_i P_{t}^i \log \phi_t^{i}
\end{equation}
where $\{\phi_t\}_{t\in\mathcal{T}}$ is the sequence calculated by \eqref{eqphi}.
\end{lemma}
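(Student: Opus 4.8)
The plan is to exploit the explicit form of the KL-optimal policy from Theorem~\ref{theo1} to show that, once $Q^*$ is frozen to this policy, the integrand of \eqref{eqprob_n_inf} collapses into a telescoping increment, rendering the objective independent of the decision variable $\{Q_t\}_{t\in\mathcal{T}}$. First I would define the effective per-stage cost $\bar{C}_t^{ij}\triangleq C_t^{ij}+\alpha\log(Q_t^{ij*}/R_t^{ij})$, so that \eqref{eqprob_n_inf} reads $\min_{\{Q_t\}}\sum_{t}\sum_{i,j}P_t^iQ_t^{ij}\bar{C}_t^{ij}$, a problem that is now \emph{linear} in $Q$ because the logarithmic term is a fixed constant (the parameter $Q^*$ is given, not optimized).

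The key step is to substitute the closed form \eqref{eqoptq}. From $Q_t^{ij*}/R_t^{ij}=(\phi_{t+1}^j/\phi_t^i)\exp(-C_t^{ij}/\alpha)$ one gets $\alpha\log(Q_t^{ij*}/R_t^{ij})=\alpha\log\phi_{t+1}^j-\alpha\log\phi_t^i-C_t^{ij}$, and hence the travel cost cancels exactly, leaving $\bar{C}_t^{ij}=\alpha(\log\phi_{t+1}^j-\log\phi_t^i)$. This exact cancellation is the algebraic heart of the equalizer property. Plugging it into the objective, I would treat the two resulting sums separately: for the $\log\phi_t^i$ term I use the normalization $\sum_j Q_t^{ij}=1$ to reduce $\sum_{i,j}P_t^iQ_t^{ij}\log\phi_t^i$ to $\sum_i P_t^i\log\phi_t^i$, and for the $\log\phi_{t+1}^j$ term I use the state dynamics \eqref{eqpq2}, i.e. $\sum_i P_t^iQ_t^{ij}=P_{t+1}^j$, to rewrite $\sum_{i,j}P_t^iQ_t^{ij}\log\phi_{t+1}^j$ as $\sum_j P_{t+1}^j\log\phi_{t+1}^j$. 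Writing $g_t\triangleq\sum_iP_t^i\log\phi_t^i$, the stage cost becomes precisely $\alpha(g_{t+1}-g_t)$.

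Summing over $\tau=t,\dots,T-1$ telescopes to $\alpha(g_T-g_t)$, and the terminal condition $\phi_T^i=1$ forces $g_T=0$, leaving $-\alpha g_t=-\alpha\sum_iP_t^i\log\phi_t^i$. Since this value is attained for \emph{every} admissible $\{Q_\tau\}_{\tau}$, the cost of \eqref{eqprob_n_inf} is constant in the decision variable, which simultaneously yields both claims: any $\{Q_t\}\in\mathcal{Q}$ is optimal (the equalizer property), and the minimum value is $\tilde{V}_t(P_t)=-\alpha\sum_iP_t^i\log\phi_t^i$, establishing \eqref{eqlembestresponse}. I do not anticipate a genuine obstacle, as the argument is an exact cancellation followed by a telescoping sum; the only points requiring care are that $\phi_t^i>0$ so the logarithms are well defined (this follows inductively from \eqref{eqphi}, since $R_t^{ij},\exp(-C_t^{ij}/\alpha)>0$ and $\phi_T^i=1$), and that the cancellation must be verified for an \emph{arbitrary} feasible $Q$ rather than only for $Q^*$ — which is exactly why the normalization and the dynamics, valid for every $Q\in\mathcal{Q}$, are invoked rather than any optimality of $Q$.
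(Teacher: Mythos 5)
Your proof is correct and rests on exactly the same algebraic heart as the paper's: substituting the closed form \eqref{eqoptq} so that $C_t^{ij}+\alpha\log(Q_t^{ij*}/R_t^{ij})$ collapses to $\alpha(\log\phi_{t+1}^j-\log\phi_t^i)$, after which the normalization $\sum_j Q_t^{ij}=1$ and the dynamics \eqref{eqpq2} eliminate the decision variable entirely. The only cosmetic difference is that you unroll the argument as a telescoping sum over the whole horizon, whereas the paper packages the identical cancellation as a backward induction on the Bellman equation \eqref{eqbellman3}; the two presentations are equivalent.
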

\begin{proof}
We show \eqref{eqlembestresponse} by backward induction. If $t=T$, the claim trivially holds due to the definition $\tilde{V}_{T}(P_T)=0$ and the fact that the terminal condition for \eqref{eqphi} is given by $\phi_T^{i}=1$. Thus, for $0\leq t \leq T-1$, assume that
\[
\tilde{V}_{t+1}(P_{t+1})=-\alpha \sum_j P_{t+1}^j \log \phi_{t+1}^{j} 
\]
holds.
Using $\rho_t^{ij}=C_t^{ij}-\alpha \log \phi_{t+1}^{j}$, the Bellman equation \eqref{eqbellman3} can be written as 
\begin{equation}
\tilde{V}_{t}(P_{t})=\min_{Q_{t}} \sum_{i,j}P_{t}^i Q_{t}^{ij}\left(\rho_t^{ij}+\alpha\log\frac{Q_t^{ij*}}{R_t^{ij}}\right). \label{eqbellman4}
\end{equation}
Substituting $Q_t^{ij*}$ obtained by \eqref{eqoptq} into \eqref{eqbellman4}, we have
\begin{subequations}
\label{eqvnt}
\begin{align}
\tilde{V}_{t}(P_{t}) &=\min_{Q_{t}}\sum\nolimits_{i,j} P_{t}^i Q_{t}^{ij}\left(-\alpha \log \phi_t^{i}\right) \label{eqvntb1} \\
&=\min_{Q_{t}} \sum\nolimits_i P_{t}^i\left(-\alpha \log \phi_t^{i}\right) \underbrace{\sum\nolimits_j Q_{t}^{ij}}_{=1} \label{eqvntb2} \\
&=-\alpha \sum\nolimits_i P_{t}^i \log \phi_t^{i}. \label{eqvntb3}
\end{align}
\end{subequations}
This completes the proof of \eqref{eqlembestresponse}.
The chain of equalities \eqref{eqvnt} also shows that  the decision variable $Q_t$ vanishes in the ``min'' operator, indicating that any $Q_t\in \mathcal{Q}$ is a minimizer. 
This shows the equalizer property of $\{Q_t^*\}_{t\in\mathcal{T}}$.
\end{proof}
Lemma~\ref{lembestresponse} provides the following insights into the MFE of the traffic routing game: Suppose that all the players except the player $n$ adopt the policy $Q^*$ (the optimal solution to \eqref{eqoptcontrol2}) and the number of players tends to infinity. Since $Q^*$ will equalize the costs of all alternative routing policies for player $n$, any routing policy will be a best response for her. In particular, this means that the policy $Q^*$ itself will also be one of the best responses, and thus the fixed point condition \eqref{eqconsistency} will be satisfied.
Therefore, $Q^*$ will be an MFE of the considered traffic routing game. The following theorem, which is the main result of this paper, confirms this intuition.
\begin{theorem}
\label{theomfe}
A symmetric strategy profile $Q_{n,t}^{ij}=Q_t^{ij*}$ for each $n\in\mathcal{N}, t\in \mathcal{T}$ and $i,j\in\mathcal{V}$, where $Q_t^{ij*}$ is obtained by \eqref{eqphi}--\eqref{eqoptq}, is an MFE of the traffic routing game.
\end{theorem}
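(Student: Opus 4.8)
The plan is to verify the two defining properties of an MFE directly for the candidate profile. Property (a), symmetry, holds by construction, since the profile assigns the common policy $Q^*=\{Q_t^*\}_{t\in\mathcal{T}}$ of \eqref{eqphi}--\eqref{eqoptq} to every player. The entire burden is thus on property (b): producing a sequence $\epsilon_N\searrow 0$ such that no unilateral deviation $Q_n$ improves player $n$'s cost by more than $\epsilon_N$ when all others play $Q^*$. My strategy is to sandwich the finite-population cost $J(Q_n,Q_{-n}^*)$ between the limiting objective of \eqref{eqprob_n_inf} and a vanishing error, then exploit the equalizer property of Lemma~\ref{lembestresponse} to make the limiting objective independent of the deviation.

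Concretely, I would fix $Q^*$ as the parameter in \eqref{eqprob_n_inf} and write $J_\infty(Q_n)$ for the resulting objective. Since $Q^*$ is the KL-control solution, Lemma~\ref{lembestresponse} (via the telescoping in \eqref{eqvnt}) gives $J_\infty(Q_n)=\tilde V_0(P_0)=-\alpha\sum_i P_0^i\log\phi_0^i$ for \emph{every} admissible $Q_n$, so the limiting cost does not see the deviation at all. Next I would control the gap $E_N(Q_n):=J(Q_n,Q_{-n}^*)-J_\infty(Q_n)=\sum_{t}\sum_{i,j}P_{n,t}^iQ_{n,t}^{ij}\,\delta_{N,t}^{ij}$, where $\delta_{N,t}^{ij}:=\Pi_{N,n,t}^{ij}-\alpha\log(Q_t^{ij*}/R_t^{ij})$, splitting the sum according to whether a visited location is \emph{populated} ($P_t^{i*}>0$) or \emph{empty} ($P_t^{i*}=0$) under the equilibrium flow. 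On the finite, $Q_n$-independent set of populated pairs, Lemma~\ref{lemlimpi} gives $\delta_{N,t}^{ij}\to 0$ termwise, so $\eta_N:=\max_{(i,j,t):\,P_t^{i*}>0}|\delta_{N,t}^{ij}|$ is a null sequence and, since each time slice carries total weight one, the populated contribution is bounded below by $-T\eta_N$ uniformly in $Q_n$. On empty locations no other player can be present, so $K_{N,t}^i=K_{N,t}^{ij}=1$ deterministically and $\delta_{N,t}^{ij}=-\alpha\log Q_t^{ij*}\ge 0$, making the empty contribution nonnegative. Hence $E_N(Q_n)\ge -T\eta_N$ for all $Q_n$, while the equilibrium trajectory $Q_n=Q_n^*$ visits only populated states, giving $|E_N(Q_n^*)|\le T\eta_N$. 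Chaining these,
\[
J(Q_n,Q_{-n}^*)\ge \tilde V_0(P_0)-T\eta_N\ge J(Q_n^*,Q_{-n}^*)-2T\eta_N,
\]
so $\epsilon_N:=2T\eta_N$ establishes (b), and symmetry of the construction makes $\epsilon_N$ uniform over $n$.

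The main obstacle is the treatment of empty locations, where $P_t^{i*}=0$. There Lemma~\ref{lemlimpi} does not apply (its hypothesis $P_t^{i*}Q_t^{ij*}>0$ fails), so a naive two-sided uniform bound on $\delta_{N,t}^{ij}$ is unavailable, and one must rule out the possibility that diverting into an uncongested region strictly beats the equilibrium. The resolution is that property (b) only requires a uniform \emph{lower} bound on $J(Q_n,Q_{-n}^*)$, and the explicit computation $\delta_{N,t}^{ij}=-\alpha\log Q_t^{ij*}\ge 0$ shows a lone deviator is charged \emph{more} than the mean-field value rather than rewarded. Pinning down this sign, together with securing uniformity of $\eta_N$ over the infinitely many deviations $Q_n$ through finiteness of the populated state set, is the crux; the remainder is the telescoping bookkeeping already established in Lemma~\ref{lembestresponse}.
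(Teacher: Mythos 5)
Your proof is correct and follows essentially the same route as the paper's: lower-bound $J(Q_n,Q_{-n}^*)$ by the limiting objective of \eqref{eqprob_n_inf} using the convergence of $\Pi_{N,n,t}^{ij}$ from Lemma~\ref{lemlimpi}, then invoke the equalizer property of Lemma~\ref{lembestresponse} to make that limit independent of the deviation, yielding a uniform $\epsilon_N\searrow 0$. You are in fact slightly more careful than the paper on two points it leaves implicit: the two-sided bound along the equilibrium trajectory needed to pass from the limiting value $\tilde V_0(P_0)$ back to the finite-$N$ cost $J(Q_n^*,Q_{-n}^*)$, and the location--action pairs where the hypothesis $P_t^{i*}Q_t^{ij*}>0$ of Lemma~\ref{lemlimpi} fails.
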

\begin{proof}
\ifdefined\LONGVERSION
Appendix~\ref{apptheomfe}.
\else
\cite[Appendix D]{arXivversion}.
\fi
\end{proof}
Theorem~\ref{theomfe}, together with Theorem~\ref{theo1}, provides an efficient algorithm for computing an MFE of the traffic routing game presented in Section~\ref{secprob}.
In particular, we remark that the MFE can be computed by the backward-in-time recursion  \eqref{eqphi}--\eqref{eqoptq}.
This is in stark contrast to the standard MFG formalism in which  a coupled pair of forward and backward equations must be solved to obtain an MFE.

Finally, we remark that the equalizer property of the MFE $Q^*$ characterized 
by Lemma~\ref{lembestresponse} is a reminiscent of the \emph{Wardrop's first principle}, stating that costs are equal on all the routes used at the  equilibrium. Although the costs usually mean journey times in the literature around Wardrop's principles \cite{wardrop1952some,correa2011wardrop}, the cost in our setting is the sum of the travel costs and the tax costs as stated in \eqref{eqobjn}. In this sense, Lemma~\ref{lembestresponse} can be viewed as an extension of the standard description of the Wardrop's first principle.
}

\section{Weak and strong time consistency}
\label{sectc}

{\upd This short section presents another notable property of the MFE derived in the previous section.}
Let $Q_{n,t}=Q_t$ for each $n\in\mathcal{N}$ and $0 \leq t \leq T-1$ be a symmetric strategy profile, and $P_t$ be the probability distribution over $\mathcal{V}$ induced by $Q_t$ as in \eqref{eqpq2}.
For every time step $0 \leq t \leq T-1$, a dynamic game restricted to the time horizon $\{t, t+1, ... , T-1\}$ with the initial condition $P_t$ is called the \emph{subgame} of the original game.
The following are natural extensions of the \emph{strong and weak time consistency} concepts  in the dynamic game theory \cite{basar1999dynamic} to MFGs.
\begin{definition}
An MFE strategy profile $Q^*$ is said to be:
\begin{enumerate}[leftmargin=*]
\item \emph{weakly time-consistent} if for every $0 \leq t \leq T-1$, $\{Q_s^*\}_{t\leq s \leq T-1}$ constitutes an MFE of the subgame restricted to $\{t, t+1, ... , T-1\}$ when $\{Q_s\}_{0\leq s \leq t-1}=\{Q_s^*\}_{0\leq s \leq t-1}$.
\item \emph{strongly time-consistent} of for every $0 \leq t \leq T-1$, $\{Q_s^*\}_{t\leq s \leq T-1}$ constitutes an MFE of the subgame restricted to $\{t, t+1, ... , T-1\}$ when regardless of the policy $\{Q_s\}_{0\leq s \leq t-1}$ implemented in the past.
\end{enumerate}
\end{definition}
In the standard MFG formalism \cite{caines2013mean,lasry2007mean} where the MFE is characterized by a forward-backward HJB-FPK system, the equilibrium policy is only  weakly time-consistent in general. This is because, in the event of $P_t$ not being consistent with the distribution induced by $\{Q_s^*\}_{0\leq s \leq t-1}$, the MFE of the subgame must be recalculated by solving the HJB-FPK system over $t\leq s \leq T-1$. 
In contrast, the MFE considered in this paper is characterized only by a backward equation (Theorems~\ref{theo1} and \ref{theomfe}). A notable consequence of this fact is that even if the initial condition $P_t$ is inconsistent with the planned distribution, it does not alter the fact that $\{Q_s^*\}_{t\leq s \leq T-1}$ constitutes an MFE of the subgame restricted to $t\leq s \leq T-1$. Therefore, the MFE characterized by Theorems~\ref{theo1} and \ref{theomfe} is strongly time-consistent.

\section{Mean field equilibrium and fictitious play}
\label{secfictitious}

{\upd The equalizer property of the MFE characterized by Lemma~\ref{lembestresponse} raises the following question regarding the stability of the equilibrium: 
If the MFE equalizes the costs of all the available route selection policies, what incentivizes individual players to stay at the MFE policy $Q^*$? 
In this section, we reason about the stability of MFE by relating it with the convergence of the \emph{fictitious play} process \cite{monderer1996fictitious} for an associated repetitive game.
Convergence of fictitious play processes have been studied in depth in \cite{monderer1996fictitious} and \cite{shamma2005dynamic}. We also remark that fictitious play for day-to-day policy adjustments for traffic routing has been considered in \cite{garcia2000fictitious,xiao2013average}.
Fictitious play in the context of MFGs has been studied in the recent work \cite{cardaliaguet2017learning}.
}

\begin{figure}[t]
    \centering
    \includegraphics[width=0.6\columnwidth]{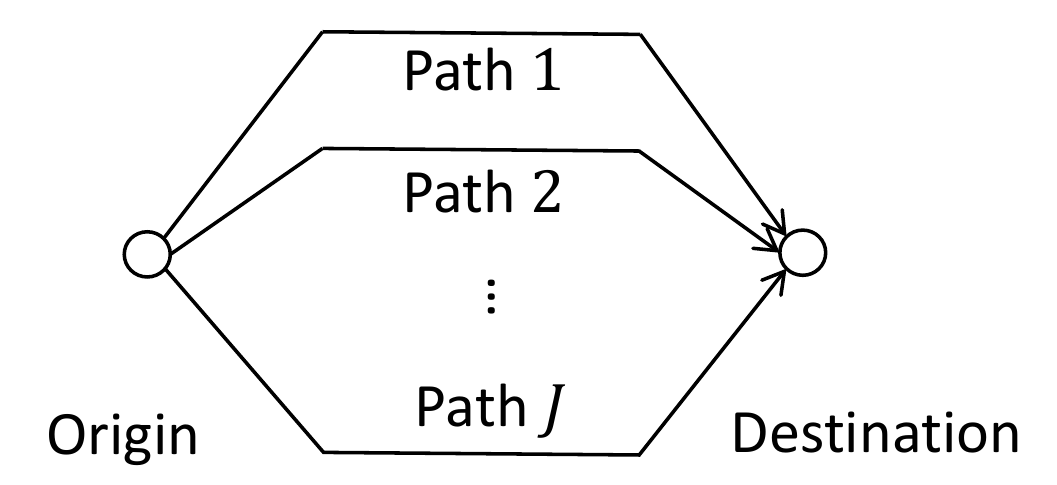}
    \caption{Simple path choice problem.}
    \label{fig:path}
    \vspace{-5ex}
\end{figure}

Consider the situation in which the traffic routing game is repeated on a daily basis, and individual players update their routing policies based on their past experiences. 
For simplicity, we only consider a single-origin-single-destination, $N$-player traffic routing game shown in Figure~\ref{fig:path}. We assume that there are $J$ parallel routes from the origin to the destination. All players are initially located at the origin node. 
Each route $j$ is associated with the travel cost $c^j$ and the tax cost $\alpha\log\frac{K_N^j}{NR^j}$, where $K_N^j$ is the number of players selecting route $j$. As before, $\alpha, C^j, R^j$ are given constants. {\upd 
By \emph{fictitious play}, we mean the following day-to-day policy adjustment mechanism for individual players:
On day one, each player $n\in\mathcal{N}$ makes initial guesses on player $m$'s mixed strategies (for all $m\neq n$) for their route selection. Player $n$'s belief on player $m$'s policy is demoted by $Q_{n\rightarrow m}[1]\in\Delta^{J-1}$. Assuming that $Q_{n\rightarrow m}[1], \forall m\neq n$ are fixed, player $n$ selects a route with the lowest expected cost. Player $n$'s route selection is observed and recorded by all players at the end of day one.
On day $\ell$, player $n$'s belief $Q_{n\rightarrow m}[\ell]\in\Delta^{J-1}$ for each $m\neq n$ is set to be equal to the vector of observed empirical frequencies of player $m$'s route choices up to day $\ell-1$. The process is repeated on a daily basis. We call $Q_{n\rightarrow m}[\ell]$, $\ell=1,2, \cdots$ for all pairs $m\neq n$ the \emph{belief paths}. 
}
The process is summarized in Algorithm~\ref{alg1}.

\begin{algorithm}[h]
  \label{alg1}
  \caption{The fictitious play process for the simplified traffic routing game.}

{\bf Step 0:}  On day one, each player $n$ initializes a mixed strategy in belief
$
Q_{n\rightarrow m}[1] \in \Delta^{J-1}$ for each $m \neq n
$
according to which she believes player $m$ select routes. 

{\bf Step 1:} At the beginning of day $\ell$, each player $n$ fixes assumed mixed strategy $Q_{n\rightarrow m}[\ell]\in \Delta^{J-1}$ according to which she believes player $m$ select routes. Based on this assumption, she selects her best response $r_n[\ell]=\argmin_j y_n^j[\ell]$, where $y_n^j[\ell]$ is the assumed cost of selecting route $j$, i.e.,
\begin{equation}
\label{eqfppbd}
y_{n}^j[\ell]=\mathbb{E} \left(C^j + \alpha \log \frac{K_N^j}{NR^j}\right).
\end{equation}

{\bf Step 2:}  At the end of day $k$, each player $n$ updates her belief based on observations $r_m[\ell], m \neq n$ by
\begin{equation}
\label{eqfpupdate}
Q_{n\rightarrow m}[\ell+1]=\frac{\ell}{\ell+1}Q_{n\rightarrow m}[\ell]+\frac{1}{\ell+1}\delta(r_m[\ell])
\end{equation}
where $\delta(r)$ is the indicator vector whose $r$-th entry is 
one and all other entries are zero. 
Return to Step 1.
\end{algorithm}

{\upd In what follows, we show that Algorithm~\ref{alg1} converges to a unique symmetric Nash equilibrium of the $N$-player game shown in Figure~\ref{fig:path} if the initial belief is symmetric. A numerical simulation is presented  in Section~\ref{secfpsim} to demonstrate this convergence behavior, where we also observe that the policy obtained in the limit of the belief path is closely approximated by the MFE if $N$ is sufficiently large. This observation provides the MFE with an interpretation as a steady-state value of the players' day-to-day belief adjustment processes in a large population traffic routing game.

Convergence of Algorithm~\ref{alg1} is a straightforward consequence of Monderer and Shapley \cite{monderer1996fictitious}, where it is shown that every belief path for $N$-player games with identical payoff functions converges to an equilibrium. 
This result is directly applicable to the $N$-player traffic routing game shown in Figure~\ref{fig:path} since it is clearly a symmetric game. The only caveat is that there is no guarantee that the belief path converges to a symmetric equilibrium if the game has multiple Nash equilibria (including non-symmetric ones). However, this difficulty can be circumvented if we impose an additional assumption that the initial belief is symmetric, i.e., $Q_{n\rightarrow m}[1]=Q[1]$ for some $Q[1]\in\Delta^{J-1}$ for all $(m,n)$ pairs. If the initial belief is symmetric, the belief path generated by Algorithm~\ref{alg1} remains symmetric, i.e., $Q_{n\rightarrow m}[\ell]=Q[\ell]$, $y_n[\ell]=y[\ell]$ and $r_n[\ell]=r[\ell]$ for $\ell\geq 1$. In this case, equations \eqref{eqfppbd} and \eqref{eqfpupdate} are simplified to
\begin{align*}
y^i[\ell]=C^j+\alpha \sum_{k=0}^{N-1} \log &\left(\frac{k+1}{NR^j}\right)
 {N\!-\!1 \choose k}\\&\times (Q^j[\ell])^k (1-Q^j[\ell])^{N-1-k} 
\end{align*}
and
\[
Q[\ell+1]=\frac{\ell}{\ell+1}Q[\ell]+\frac{1}{\ell+1}\delta(r[\ell])
\]
respectively. 
Combined with the convergence result by Monderer and Shapley \cite{monderer1996fictitious}, it can be concluded that every limit point of the belief path generated by Algorithm~\ref{alg1} with symmetric initial belief is a symmetric equilibrium.}

The next lemma shows that there exists a unique symmetric equilibrium in the simple traffic routing game in Figure~\ref{fig:path} with finite number of players.
\begin{lemma}
\label{lemsinglestage}
There exists a unique symmetric equilibrium, denoted by $Q^{(N)*}$, in the $N$-player traffic routing game shown in Figure~\ref{fig:path}.
\end{lemma}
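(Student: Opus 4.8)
The plan is to reduce the search for a symmetric equilibrium to a finite-dimensional convex optimization over the simplex $\Delta^{J-1}$ and then exploit strict convexity. First I would write down the expected cost of a route as seen by a single player when every other player independently selects route $j$ with probability $q$. Since the number of the remaining $N-1$ players who choose $j$ is $\mathrm{Bin}(N-1,q)$ and the player herself adds one to the count, the expected cost of route $j$ is
\[
g^j(q)\triangleq C^j-\alpha\log(NR^j)+\alpha\,\mathbb{E}\big[\log(1+X)\big],\qquad X\sim\mathrm{Bin}(N-1,q),
\]
which matches the simplified expression for $y^j[\ell]$ in the preceding discussion. Because $g^j$ depends only on the probability $q=Q^j$ assigned to route $j$, the map $G(Q)\triangleq(g^1(Q^1),\dots,g^J(Q^J))$ captures all the interaction. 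A symmetric profile $Q^*\in\Delta^{J-1}$ is an equilibrium exactly when it is a best response to itself, i.e.\ when $Q^*$ places all its mass on routes minimizing $g^j(Q^{j*})$; equivalently $\langle G(Q^*),\,Q-Q^*\rangle\ge 0$ for every $Q\in\Delta^{J-1}$. This is the standard variational inequality (VI) characterization of the equilibrium.

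Second, I would show that each $g^j$ is strictly increasing on $[0,1]$ (assuming $N\ge 2$). Using the elementary identity $\frac{d}{dq}\mathbb{E}[f(X)]=(N-1)\,\mathbb{E}[f(Y+1)-f(Y)]$ for $X\sim\mathrm{Bin}(N-1,q)$ and $Y\sim\mathrm{Bin}(N-2,q)$, and taking $f(x)=\log(1+x)$, the increment $f(Y+1)-f(Y)=\log\frac{Y+2}{Y+1}$ is strictly positive, so $\frac{d}{dq}g^j(q)>0$. Consequently the separable function $\Phi(Q)\triangleq\sum_j\int_0^{Q^j}g^j(s)\,ds$ is continuously differentiable with $\nabla\Phi=G$, and it is strictly convex because each integrand $g^j$ is strictly increasing.

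Third, I would invoke the standard equivalence for convex problems: since $\Phi$ is convex and differentiable, $Q^*$ solves the VI $\langle\nabla\Phi(Q^*),Q-Q^*\rangle\ge 0$ for all $Q\in\Delta^{J-1}$ if and only if $Q^*$ minimizes $\Phi$ over $\Delta^{J-1}$. The simplex is compact and convex, so a minimizer exists, and strict convexity of $\Phi$ makes it unique. Combining the three steps, the unique minimizer of $\Phi$ over $\Delta^{J-1}$ is the unique symmetric equilibrium $Q^{(N)*}$.

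The main obstacle is the second step, namely establishing strict monotonicity of $g^j$ rigorously, because the cost is the expectation of $\log(1+X)$ against a binomial law whose parameter is exactly the decision variable. I would handle it through the finite-difference derivative formula above, or, to avoid differentiating, through a coupling argument showing that $\mathrm{Bin}(N-1,q)$ first-order stochastically dominates $\mathrm{Bin}(N-1,q')$ for $q>q'$, combined with monotonicity of $x\mapsto\log(1+x)$. Note that this argument requires $N\ge 2$; for $N=1$ there is no congestion coupling and $g^j$ is constant, so the claim must be read as the (generically unique) deterministic choice of a cost-minimizing route.
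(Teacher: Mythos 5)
Your proof is correct, but it takes a genuinely different route from the paper's. You build the Beckmann-type potential $\Phi(Q)=\sum_j\int_0^{Q^j}g^j(s)\,ds$, identify symmetric equilibria with solutions of the variational inequality $\langle\nabla\Phi(Q^*),Q-Q^*\rangle\ge 0$ over $\Delta^{J-1}$, and get existence from compactness and uniqueness from strict convexity of $\Phi$. The paper instead works directly with the KKT/Wardrop conditions: it requires $f_N^j(Q^{j*})=\lambda$ on used routes and $f_N^j(0)\ge\lambda$ on unused ones, defines the clipped inverse $\lambda\mapsto(f_N^j)^{-1}(\lambda)$ for each $j$, and pins down the unique $\lambda$ with $\sum_j (f_N^j)^{-1}(\lambda)=1$ via the intermediate value theorem and monotonicity. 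Both arguments hinge on the same key fact --- that the expected cost of route $j$ is strictly increasing in $Q^j$ --- and here your write-up is actually more complete: the paper simply asserts this monotonicity, whereas you prove it via the finite-difference derivative identity for binomial expectations (or equivalently stochastic dominance), and you correctly flag the degenerate $N=1$ case. What the potential approach buys is a one-line uniqueness argument and a variational characterization that generalizes readily; what the paper's approach buys is an explicit construction of $Q^{(N)*}$ as $Q^{j*}=g^j(\lambda)$ and a direct link to Wardrop's first principle. Your VI characterization is legitimate because the best-response problem is linear in the deviating player's own mixed strategy, so no additional convexity of the per-player objective is needed; the only point worth making explicit in a final write-up is that the route-$j$ cost depends on the opponents' profile only through $Q^j$, which is what makes $\Phi$ separable and $G$ a gradient field.
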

\begin{proof}
\ifdefined\LONGVERSION
Appendix~\ref{app2}.
\else
\cite[Appendix E]{arXivversion}.
\fi
\end{proof}
{\upd
Now, consider the limit $\lim_{N\rightarrow \infty} Q^{(N)*}$ and its relationship with the MFE $Q^*$.
Notice that the MFE of the traffic routing game in Figure~\ref{fig:path} is characterized as the unique solution to the following convex optimization problem:}
\begin{equation}
\label{eqfpprob}
\min_{Q\in\Delta^{J-1}}  \sum_{j=1}^J Q^j \left(C^j+\alpha\log\frac{Q^j}{R^j}\right).
\end{equation}
{\upd
In Section~\ref{secfpsim}, we perform a simulation study where we observe that $Q^{*}$ is a good approximation of $Q^{(N)*}$ when $N$ is sufficiently large. Although the condition under which the identity $\lim_{N\rightarrow \infty} Q^{(N)*}=Q^{*}$ holds must be studied carefully in the future,\footnote{While Lemma~\ref{lemsinglestage} establishes the uniqueness of the symmetric equilibrium for a simple traffic routing game shown in  Figure~\ref{fig:path}, its extension to the general class of traffic routing game formulated in Section~\ref{secprob} is currently unknown. The proof of the identity $\lim_{N\rightarrow \infty} Q^{(N)*}=Q^{*}$ (for both the simple game in Figure~\ref{fig:path} and the general setup in Section~\ref{secprob}) must be postponed as future work.} this observation suggests an important practical interpretation of the MFE: namely, it is an approximation of the limit point of the belief path (or equivalently, the empirical frequency of each player to take particular routes) of the symmetric fictitious play when $N$ is large. This provides an answer to the question regarding the stability of the MFE raised in the beginning of this section.}

\section{Numerical Illustration}
\label{secsimulation}
{\upd 
In this section, we present numerical simulations that illustrate the main results obtained in Sections~\ref{secmfe} and \ref{secfictitious}.}

\subsection{Traffic routing game and congestion control}
\label{secsimulation1}
We first illustrate the result of Theorem~\ref{theomfe} applied to a traffic routing game shown in Fig.~\ref{fig:simulation}.
At $t=0$, the population is concentrated in the origin cell (indicated by ``O''). 
For $t\in\mathcal{T}$, the travel cost for each player is 
\[
C_t^{ij}=\begin{cases}
C_{\text{term}} & \text{ if } j=i\\
1+C_{\text{term}}  & \text{ if } j\in\mathcal{V}(i)\\
100000+C_{\text{term}}  & \text{  if } j\not\in\mathcal{V}(i) \text{ or } j \text{ is an obstacle}
\end{cases}
\]
where $\mathcal{V}(i)$ contains the north, east, south, and west neighborhood of the cell $i$. To incorporate the terminal cost, we introduce $C_{\text{term}}=0$ if $t=0, 1, \cdots, T-1$ and $C_{\text{term}}=10\sqrt{\text{dist}(j, \text{D})}$ if $t=T-1$, where $\text{dist}(j, \text{D})$ is the Manhattan distance between the player's final location $j$ and the destination cell (indicated by ``D''). 
As the reference distribution, we use $R_t^{ij}=1/|\mathcal{V}(i)|$ (uniform distribution) for each $i\in\mathcal{V}$ and $t\in \mathcal{T}$ to incentivize players to spread over the traffic graph.

For various values of $\alpha >0$, the backward formula \eqref{eqphi} is solved and the optimal policy is calculated by \eqref{eqoptq}.
If $\alpha$ is small (e.g., $\alpha=0.1$), it is expected that players will take the shortest path since the action cost is dominant compared to the tax cost \eqref{eqtax}. This is confirmed by numerical simulation; three figures in the top row of Fig.~\ref{fig:simulation} show snapshots of the population distribution at time steps $t=20, 35$ and $50$. In the bottom row, similar plots are generated with a larger $\alpha$ ($\alpha=1$). In this case, it can be seen that the equilibrium strategy will choose longer paths with higher probability to reduce congestion.

\begin{figure}[t]
    \centering
    \includegraphics[width=\columnwidth]{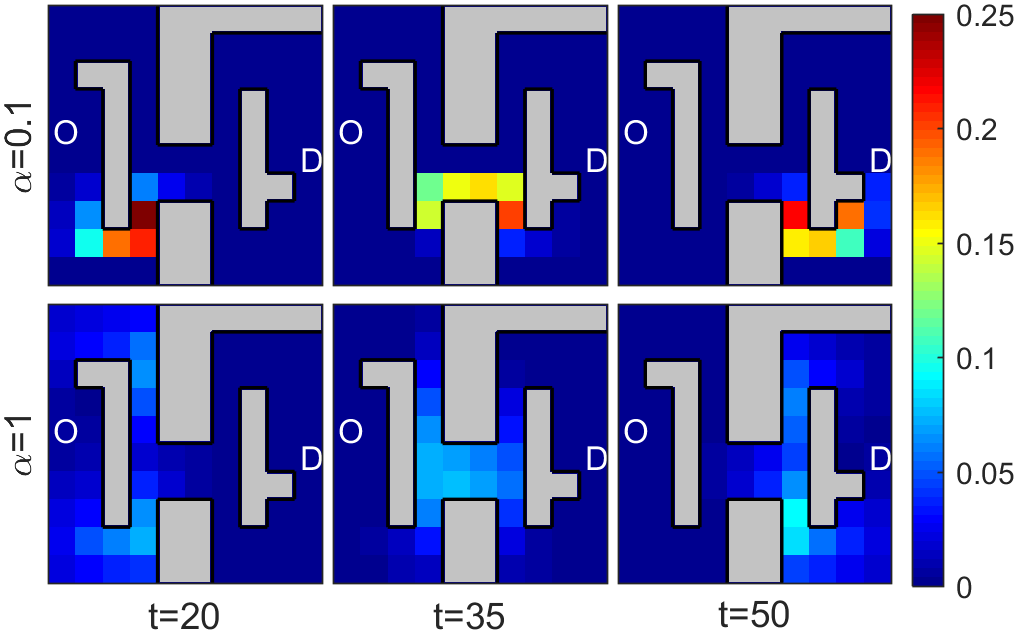}
    \caption{Mean-field traffic routing game with $T=70$ over a traffic graph with $100$ nodes (grid world with obstacles). Plots show vehicle distribution $P_t$ at $t=20, 35, 50$ and for $\alpha=0.1$ and $1$.}
    \label{fig:simulation}
    \vspace{-5ex}
\end{figure}

\subsection{Symmetric fictitious play}
\label{secfpsim}

Next, we present a numerical demonstration of the symmetric fictitious play studied in Section~\ref{secfictitious}. 
Consider a simple traffic graph in Figure~\ref{fig:path} with three alternative paths ($J=3$). We set travel costs $(C^1, C^2, C^3)=(2,1,3)$, while fixing $R^1=R^2=R^3=1/3$ and $\alpha=1$. Figure~\ref{fig:fp} shows the belief path generated by the policy update rule \eqref{eqfpupdate} with the initial policy $Q[1]=(1/3, 1/3, 1/3)$. The left shows the case with $20$ players ($N=20$), while the right plot shows the case with $N=200$. The MFE 
\[
Q^*=\frac{1}{\sum_{j} R^j\exp(-c^j)} \begin{bmatrix} R^1\exp(-c^1) \\
R^2\exp(-c^2) \\
R^3\exp(-c^3)
\end{bmatrix}=\begin{bmatrix} 0.245 \\
0.665 \\
0.090
\end{bmatrix}
\]
is also shown in each plot. The plot for $N=20$ shows that, while the belief path is convergent, there is a certain offset between its limit point and the MFE. This is because the number of players is not sufficiently large. On the other hand, when $N=200$, the MFE $Q^*$ is a good approximate to the limit point of the belief path. 

\begin{figure*}[t]
    \centering
    \includegraphics[width=0.8\textwidth]{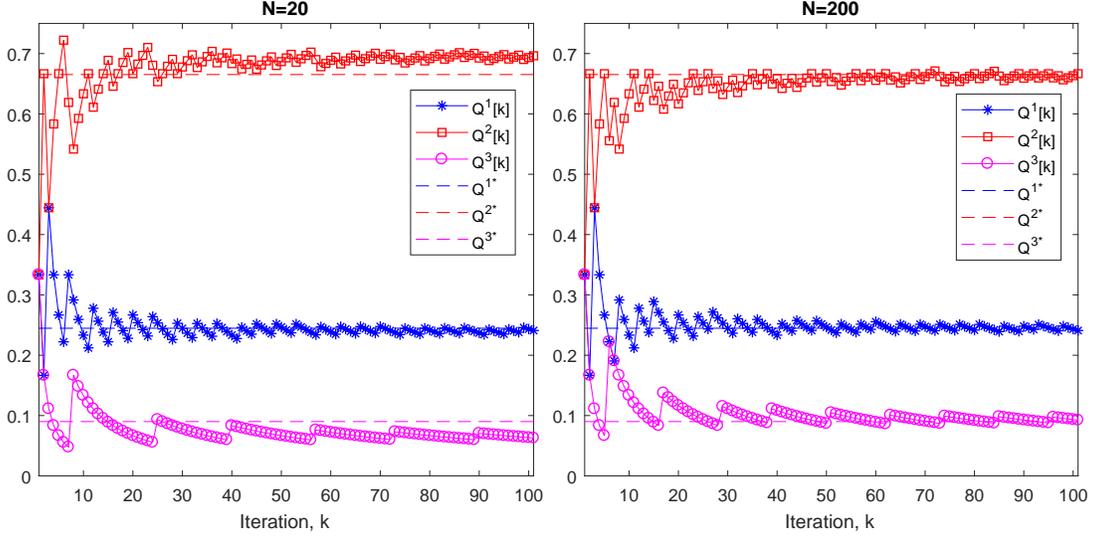}
    \caption{Convergence of the belief path generated by the symmetric fictitious play \eqref{eqfpupdate} in the $N$-player single-stage traffic routing game with three ($J=3$) alternative paths shown in Figure~\ref{fig:path}. The left plot shows the case with $N=20$ while the right plot show the case with $N=200$. The value of MFE $Q^*$ is also shown.}
    \label{fig:fp}
\end{figure*}

\section{Conclusion and Future Work}
\label{secconclude}
In this paper, we showed that the MFE of a large-population traffic routing game under the log-population tax mechanism can be obtained via the linearly solvable MDP.
Strong time consistency of the derived MFE was discussed. 
A connection between the MFE and fictitious play was investigated.

While this paper is restricted discrete-time discrete-state formalisms, its continuous-time continuous-state counterpart is worth investigating in the future. The interface between the existing traffic SUE theory \cite{sheffi1984urban} and MFG must be thoroughly studied in the future work. Convergence of fictitious play and its relationship with MFE presented in Section~\ref{secfictitious} should be studied in more general settings. {\upd Linear solvability renders the proposed MFG framework attractive as an incentive mechanism for TSOs for the purpose of traffic congestion mitigation; however, questions from the perspectives of mechanism design theory, such as how to tune parameters $\alpha$ and $R$ (which are assumed given in this paper) to balance the efficiency and budget, are unexplored.  
Finally, generalization to non-homogeneous MFGs with multiple classes of players (which was recently studied in \cite{pedram2019}) needs further investigation.
}

\appendices

\ifdefined\LONGVERSION
\section{Explicit expression of \eqref{eqexp_pi_general}}
\label{appbinomial}

Since player $n$'s probability of taking action $j$ at location $i$ at time step $t$ is given by $P_{n,t}^iQ_{n,t}^{ij}$, the total number $K_{N,t}^{ij}$ of such players follows the Poisson binomial distribution
\[
\text{Pr}(K_{N,t}^{ij}=k)=\sum_{A\in F_k}\prod_{n\in A} P_{n,t}^iQ_{n,t}^{ij}\!\!\prod_{n^c\in A^c}\!\!(1-P_{n^c,t}^iQ_{n^c, t}^{ij}).
\]
Here, $F_k$ is the set of all subsets of size $k$ that can be selected from $\mathcal{N}=\{1,2, ..., N\}$, and $A^c=F_k\backslash A$. Similarly, the distribution of $K_{N,t}^{i}$ is given by
\[
\text{Pr}(K_{N,t}^{i}=k)=\sum_{A\in F_k}\prod_{n\in A} P_{n,t}^i\!\!\prod_{n^c\in A^c}\!\!(1-P_{n^c,t}^i).
\]
Notice also that the conditional distribution of $K_{N,t}^{ij}$ given player $n$'s location-action pair $(i_{n,t},j_{n,t})=(i,j)$ is
\begin{align}
&\text{Pr}(K_{N,t}^{ij} =k+1 \mid i_{n,t}=i, j_{n,t}=j) \nonumber \\
&=\sum_{A\in F_k^{-n}}\prod_{m\in A} P_{m,t}^iQ_{m,t}^{ij}\!\!\prod_{m^c\in A^{-c}}\!\!(1-P_{m^c,t}^iQ_{m^c,t}^{ij}). \label{eqrhadcond}
\end{align}
Here, $F_k^{-n}$ is the set of all subsets of size $k$ that can be selected from $\mathcal{N}\backslash \{n\}$, and $A^{-c}=F_k^{-n}\backslash A$. Similarly, the conditional distribution of $K_{N,t}^{i}$ given $i_{n,t}=i$ is
\begin{align}
&\text{Pr}(K_{N,t}^{i} =k+1 \mid i_{n,t}=i) \nonumber \\
&=\sum_{A\in F_k^{-n}}\prod_{m\in A} P_{m,t}^i\!\prod_{m^c\in A^{-c}}\!\!(1-P_{m^c,t}^i). \label{eqrhadcond2}
\end{align}
Therefore, given the prior knowledge that the player $n$'s location-action pair at time $t$ is $(i,j)$, the expectation of her tax penalty $\pi_{N,n,t}^{ij}$  is 
\begin{align}
&\Pi_{N,n,t}^{ij}\triangleq \mathbb{E}\left[\pi_{N,n,t}^{ij} \mid  i_{n,t}=i, j_{n,t}=j \right] \nonumber \\
=&\sum_{k=0}^{N-1} \!\alpha\log\frac{k+1}{N}\!\!\!\!\sum_{A\in F_k^{-n}}\!\prod_{m\in A}\!\! P_{m,t}^iQ_{m,t}^{ij} \!\!\!\! \prod_{m^c\in A^c}\!\!\!(1\!-\!P_{m^c\!, t}^iQ_{m^c\!, t}^{ij}) \nonumber \\
&- \sum_{k=0}^{N-1} \!\alpha\log\frac{k+1}{N}\!\!\!\!\sum_{A\in F_k^{-n}}\!\prod_{m\in A}\!\! P_{m,t}^i \!\!\!\! \prod_{m^c\in A^c}\!\!\!(1\!-\!P_{m^c\!, t}^i) \nonumber \\
&- \alpha\log R_t^{ij}.
\label{eqtijgeneral}
\end{align}
Notice that the quantity \eqref{eqtijgeneral} depends on the strategies $Q_{-n}\triangleq \{Q_m\}_{m \neq n}$, but not on $Q_n$. In other words, $\pi_{N,n,t}^{ij}$ is a random variable whose distribution does not depend on player $n$'s own strategy. 

To evaluate $\Pi_{N,n,t}^{ij}$ when all players other than player $n$ takes the same strategy (i.e., $Q_{m,t}=Q_t^*$ for $m\neq n$), notice that the conditional distributions of $K_{N,t}^{ij}$ and $K_{N,t}^{i}$ given $(i_{n,t}, j_{n,t})=(i,j)$, provided by \eqref{eqrhadcond} and \eqref{eqrhadcond2}, simplify to the binomial distributions
\begin{align*}
&\text{Pr}(K^{ij}_{N,t}=k+1|i_{n,t}=i, j_{n,t}=j)\\
&={N\!-\!1 \choose k}(P_t^{i*}Q_t^{ij*})^k (1-P_t^{i*}Q_t^{ij*})^{N-1-k}\\
&\text{Pr}(K^{i}_{N,t}=k+1|i_{n,t}=i)\\
&={N\!-\!1 \choose k}(P_t^{i*})^k (1-P_t^{i*})^{N-1-k}.
\end{align*}
Thus, the expression \eqref{eqtijgeneral} simplifies to
\begin{align}
&\Pi_{N,n,t}^{ij}=\mathbb{E}\left[\pi_{N,n,t}^{ij} \mid  i_n=i, j_n=j \right] \nonumber \\
&=\!\sum_{k=0}^{N-1}\!\alpha \log\frac{k+1}{N}{N\!-\!1 \choose k}(P_t^{i*}Q_t^{ij*})^k (1\!-\!P_t^{i*}Q_t^{ij*})^{N-1-k} \nonumber \\
&\;\;\;-\!\sum_{k=0}^{N-1}\!\alpha \log\frac{k+1}{N}{N\!-\!1 \choose k}(P_t^{i*})^k (1\!-\!P_t^{i*})^{N-1-k} \nonumber \\
&\;\;\;- \alpha \log R_t^{ij}
 \label{eqtjieq}
\end{align}
\fi

\ifdefined\LONGVERSION
\section{Proof of Lemma~\ref{lemlimpi}}
\label{app1}

Let $K^i_{N,-n,t}$ denote the number of agents, except agent $n$, which are located at intersection $i$ at time $t$ and let $K^{ij}_{N,-n,t}$ denote the number of agents, except agent $n$, which are located at intersection $i$ at time  $t$ and select intersection $j$ as their next destination. Thus, we have  $K^i_{N,-n,t}=\sum_{l\neq n}\I{i_{l,t}=i}$ and $K^{ij}_{N,-n,t}=\sum_{l\neq n}\I{i_{l,t}=i, j_{l,t}=j}$,
where $\I{\cdot}$ is the indicator function.
Then, $\Pi_{N,n,t}^{ij*}$ can be written as 
\begin{align}
\Pi_{N,n,t}^{ij*}=&\ES{\log\tfrac{1+K^{ij}_{N,-n,t}}{1+K^i_{N,-n,t}}}-\log R_t^{ij}\nonumber\\
=&\ES{\logp{ \tfrac{1+K^{ij}_{N,-n,t}}{N}}}-\nonumber\\
&\ES{\logp{\tfrac{1+K^i_{N,-n,t}}{N}}}-\log R_t^{ij}\nonumber
\end{align}
Using Jensen inequality, we have 
\begin{align}
 \ES{\logp{\tfrac{1+K^{ij}_{N,-n,t}}{N}}}&\leq \logp{\frac{1}{N}+\ES{\tfrac{K^{ij}_{N,-n,t}}{N}}}\nonumber\\
 &\stackrel{(a)}{=}\logp{\frac{1}{N}+\frac{N-1}{N}P_t^{i*}Q_t^{ij*}}\nonumber
\end{align}
where $(a)$ follows from $\ES{\frac{K^{ij}_{N,-n,t}}{N}}=\frac{N-1}{N}P_t^{i*}Q_t^{ij*}$ and the fact that all the agents employ the policy $\left\{Q^{*}_t\right\}$. Thus, we have 
\begin{align}
\limsup_{N\rightarrow\infty}\ES{\log \tfrac{1+K^{ij}_{N,-n,t}}{N}}\leq \log P_t^{i*}Q_t^{ij*}
\end{align}
Next we show the other direction. For $\epsilon\in\left(0,\right.\left.\frac{ P_t^{i*}Q_t^{ij*}}{2}\right]$, we can write $\ES{\log \frac{1+K^{ij}_{N,-n,t}}{N}}$ as
\begin{align}
\ES{\log \tfrac{1+K^{ij}_{N,-n,t}}{N}}=&\ES{\log \tfrac{1+K^{ij}_{N,-n,t}}{N}\I{\tfrac{K^{ij}_{N,-n,t}}{N}>\epsilon}}+\nonumber\\
&\ES{\log \tfrac{1+K^{ij}_{N,-n,t}}{N}\I{\tfrac{K^{ij}_{N,-n,t}}{N}\leq \epsilon}}\nonumber
\end{align}

Using the Hoeffding inequality, it follows that $\frac{K^{ij}_{N,-n,t}}{N}$ converges to $P_t^{i*}Q_t^{ij*}$ in probability as $N$ becomes large. From continues mapping theorem, we have the convergence  of $\log \frac{K^{ij}_{N,-n,t}}{N}$ in probability to $\log P_t^{i*}Q_t^{ij*} $ for $P_t^{i*}Q_t^{ij*}>0$. Similarly, $\I{\frac{K^{ij}_{N,-n,t}}{N}>\epsilon}$ converges to 1 in probability. Thus, from Slutsky's Theorem, we have $\log \frac{1+K^{ij}_{N,-n,t}}{N}\I{\frac{K^{ij}_{N,-n,t}}{N}>\epsilon}$ converges to $\log P_t^{i*}Q_t^{ij*}$ in distribution. Using Fatou's lemma and the fact that $\log \frac{1+K^{ij}_{N,-n,t}}{N}\I{\frac{K^{ij}_{N,-n,t}}{N}>\epsilon}\geq \log\epsilon $ , we have 
\begin{align}
\liminf_{N\rightarrow\infty} \ES{\log\tfrac{1+K^{ij}_{N,-n,t}}{N}\I{\tfrac{K^{ij}_{N,-n,t}}{N}>\epsilon}}\geq \log P_t^{i*}Q_t^{ij*}\nonumber
\end{align}
We also have 
\begin{align*}
&\abs{\ES{\log \tfrac{1+K^{ij}_{N,-n,t}}{N}\I{\tfrac{K^{ij}_{N,-n,t}}{N}\leq \epsilon}}} \\
&\leq \logp{N}\PRP{\tfrac{K^{ij}_{N,-n,t}}{N}\leq \epsilon}
\end{align*}
Using the Hoeffding inequality, it is straightforward to show that $\PRP{\frac{K^{ij}_{N,-n,t}}{N}\leq \epsilon}$ decays to zero exponentially in $N$ which implies that 
\begin{align}
\lim_{N\rightarrow\infty}\ES{\log \tfrac{1+K^{ij}_{N,-n,t}}{N}\I{\tfrac{K^{ij}_{N,-n,t}}{N}\leq \epsilon}}=0\nonumber
\end{align}
Thus, we have 
\begin{align}
\liminf_{N\rightarrow\infty}\ES{\log \tfrac{1+K^{ij}_{N,-n,t}}{N}}\geq \log P_t^{i*}Q_t^{ij*}
\end{align}
which implies that $\lim_{N\rightarrow\infty}\ES{\logp{ \frac{1+K^{ij}_{N,-n,t}}{N}}}=\log P^{i*}_tQ^{ij*}_t$. Following similar steps, it is straightforward to show that $\lim_{N\rightarrow\infty}\ES{\logp{ \frac{1+K^{i}_{N,-n,t}}{N}}}=\log P^{i*}_t$ which completes the proof.
\fi

\ifdefined\LONGVERSION
\section{Proof of Theorem~\ref{theo1}}
\label{apptheo1}
Due to the choice of the terminal condition $\phi_T^i=1$,
\[
V_T(P_T)=\sum_iP_T^i C_T^i = -\alpha\sum_i P_T^i \log \phi_T^i=0
\]
holds. To complete the proof by backward induction, assume that
$
V_{t+1}(P_{t+1})=-\alpha\sum_i P_{t+1}^i \log \phi_{t+1}^i
$
holds for some $0\leq t \leq T-1$. {\upd Then, due to the Bellman equation \eqref{eqbellmankl}, we have 
\begin{equation}
\label{eq_apdx_c}
 V_{t}(P_{t}) =\min_{Q_t} \left\{ \sum_{i,j} P_t^i Q_t^{ij}\left(\rho_t^{ij}+\alpha\log\frac{Q_t^{ij}}{R_t^{ij}} \right)\right\}
\end{equation}
where $\rho_t^{ij}=C_t^{ij}-\alpha\log \phi_{t+1}^j$ is a constant.
To obtain a minimizer for \eqref{eq_apdx_c} subject to the constraints $P_t^i\left(\sum_j Q_t^{ij}-1\right)=0 \; \forall i$, introduce the Lagrangian function
\begin{align*}
L(Q_t, \lambda_t)=& \sum_{i,j} P_t^i Q_t^{ij}\left(\rho_t^{ij}+\alpha\log\frac{Q_t^{ij}}{R_t^{ij}} \right) \\
&-\sum_i \lambda_t^i P_t^i \left(\sum_j Q_t^{ij}-1\right).
\end{align*}
For each $i$ such that $P_t^i>0$, we obtain from the optimality condition $\frac{\partial L}{\partial Q_t^{ij}}=0$ that
\[
Q_t^{ij}=R_t^{ij}\exp\left(\frac{\lambda_t^i}{\alpha}-1-\frac{\rho_t^{ij}}{\alpha}\right).
\]
In order to satisfy $\sum_j Q_t^{ij}=1$, the Lagrange multiplier $\lambda_t^i$ must be chosen so that
\[
Q_t^{ij}=\frac{R_t^{ij}}{Z_t^i}\exp\left(-\frac{\rho_t^{ij}}{\alpha}\right)
\]
where the normalization constant $Z_t^i$ can be computed as
\begin{align*}
Z_t^i&=\sum_j R_t^{ij}\exp\left(-\frac{\rho_t^{ij}}{\alpha}\right) \\
&=\sum_j R_t^{ij}\exp\left(-\frac{C_t^{ij}}{\alpha}\right)\phi_{t+1}^j \\
&=\phi_t^i.
\end{align*}
Therefore, we have shown that a minimizer for \eqref{eq_apdx_c} is obtained as
\[
Q_t^{ij*}=\frac{R_t^{ij}}{\phi_t^i}\exp\left(-\frac{\rho_t^{ij}}{\alpha}\right)
\]
from which \eqref{eqoptq} also follows.} By substitution, the optimal value is shown to be
$
V_t(P_t)=-\alpha \sum_i P_t^i \log \phi_t^i.
$
This completes the induction proof.
\fi

\ifdefined\LONGVERSION
\section{Proof of Theorem~\ref{theomfe}}
\label{apptheomfe}
Let the policies $Q_{m,t}^{ij}=Q_t^{ij*}$ for $m \neq n$ be fixed. It is sufficient to show that there exists a sequence $\epsilon_N \searrow 0$ such that the cost of adopting a strategy $Q_{n,t}^{ij}=Q_t^{ij*}$  for player $n$ is no greater than $\epsilon_N $ plus the cost of adopting any other policy.
Since 
\[
\Pi_{N,n,t}^{ij}\rightarrow \alpha \log \frac{Q_t^{ij*}}{R_t^{ij}} \text{ as } N\rightarrow \infty,
\]
there exists a sequence $\delta_N \searrow 0$ such that
\[
\Pi_{N,n,t}^{ij}+\delta_N > \alpha \log \frac{Q_t^{ij*}}{R_t^{ij}} \;\; \forall i, j, t.
\]
Now, for all policy $\{Q_{n,t}\}_{t\in\mathcal{T}}$ of player $n$ and the induced distributions $P_{n,t}^j=\sum_i P_{n,t}^i Q_{n,t}^{ij}$, we have
\begin{align*}
&\sum_{t=0}^{T-1}\sum_{i,j} P_{n,t}^i Q_{n,t}^{ij} \left(C_t^{ij}+\Pi_{N,n,t}^{ij}\right) \\
&>\sum_{t=0}^{T-1}\sum_{i,j} P_{n,t}^i Q_{n,t}^{ij} \left(C_t^{ij}+\alpha \log \frac{Q_t^{ij*}}{R_t^{ij}}- \delta_N \right) \\
&=\sum_{t=0}^{T-1}\sum_{i,j} P_{n,t}^i Q_{n,t}^{ij} \left(C_t^{ij}+\alpha \log \frac{Q_t^{ij*}}{R_t^{ij}}\right)- T V^2 \delta_N\\
&\geq \min_{\{Q_{n,t}\}_{t\in\mathcal{T}}}\sum_{t=0}^{T-1}\sum_{i,j} P_{n,t}^i Q_{n,t}^{ij} \left(C_t^{ij}+\alpha \log \frac{Q_t^{ij*}}{R_t^{ij}}\right) \\
& \hspace{5ex} - T V^2 \delta_N. 
\end{align*}
Notice that the minimization in the last line is attained by adopting $Q_{n,t}^{ij}=Q_t^{ij*}$. Since $\epsilon_N \triangleq T V^2 \delta_N \searrow 0$, this completes the proof.
\fi

\ifdefined\LONGVERSION
\section{Proof of Lemma~\ref{lemsinglestage}}
\label{app2}

For each $j=1, 2, ... , J$, define $f_N^j: [0, 1]\rightarrow \mathbb{R}$ by
\begin{align*}
&f_N^j(Q^j)\triangleq \\
& c^j+\alpha\sum_{n=0}^{N-1}\log\frac{n+1}{NR^j}{N\!-\!1 \choose n}(Q^j)^n (1-Q^j)^{N-1-n}.
\end{align*}
Notice that $f_N^j(Q)$ is a continuous and strictly increasing function. If $Q^*\in \Delta^{J-1}$ is a symmetric Nash equilibrium of the $N$-player single-stage traffic routing game, it satisfies the condition
\begin{equation}
\label{eqNEcond}
Q^{*}\in \argmin_{Q \in \Delta^{J-1}} \sum_{j=1}^J Q^j f_N^j(Q^{j*}).
\end{equation}
From the KKT condition, $Q^*\in \Delta^{J-1}$ satisfies \eqref{eqNEcond} if and only if there exists $\lambda\in\mathbb{R}$ such that
\begin{subequations}
\label{eqKKT}
\begin{align}
f_N^j(Q^{j*})&=\lambda \text{ for all } j \text{ such that } Q^{j*}>0 \label{eqKKT1} \\
f_N^j(0)&\geq \lambda \text{ for all } j \text{ such that } Q^{j*}=0. \label{eqKKT2}
\end{align}
\end{subequations}
Alternatively, the above condition can directly be obtained from the Wardrop's first principle \cite{wardrop1952some}.
Thus, it is sufficient to show that there exists a unique $Q^*\in \Delta^{J-1}$ satisfying the condition \eqref{eqKKT}. For each $j=1, 2, ... , N$, define $g^j:\mathbb{R}\rightarrow [0,1]$ by
\begin{equation}
\label{eqgdef}
g^j(\lambda)=\begin{cases}
0 & \text{ if } \lambda \leq f^j(0) \\
(f^j)^{-1}(\lambda) & \text{ if } f^j(0) < \lambda \leq f^j(1) \\
1 & \text{ if } f^j(1)<1.
\end{cases}
\end{equation}
The next claim follows from the intermediate value theorem and the monotonicity of $g^j$ for each $j$.
\begin{claim}
\label{claim1}
There exists $\lambda \in \mathbb{R}$ such that $g(\lambda)=1$. Moreover, if there exist $\lambda_1, \lambda_2\in\mathbb{R}$ such that $g(\lambda_1)=g(\lambda_2)$, then $g^j(\lambda_1)=g^j(\lambda_2)$ for each $j$.
\end{claim}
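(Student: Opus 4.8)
The plan is to regard $g(\lambda)\triangleq\sum_{j=1}^J g^j(\lambda)$ as an aggregate ``demand'' function of the multiplier $\lambda$ and to exploit the monotonicity it inherits from the strictly increasing maps $f^j$ (here $f^j=f_N^j$ for the fixed $N$ at hand, and the third branch of the definition of $g^j$ is read as $g^j(\lambda)=1$ when $\lambda>f^j(1)$). First I would verify that each $g^j$ is continuous and non-decreasing on all of $\mathbb{R}$. Since $f^j$ is continuous and strictly increasing, its restriction to $[0,1]$ is a homeomorphism onto $[f^j(0),f^j(1)]$, so $(f^j)^{-1}$ is continuous and strictly increasing there; continuity of $g^j$ at the two gluing points then follows because $(f^j)^{-1}(f^j(0))=0$ and $(f^j)^{-1}(f^j(1))=1$ match the adjacent constant pieces, and monotonicity is immediate since $g^j$ is constant outside $(f^j(0),f^j(1))$ and strictly increasing inside it. Consequently $g=\sum_j g^j$ is continuous and non-decreasing.

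For the existence statement I would compute the two limiting values of $g$. As $\lambda\to-\infty$ each $g^j(\lambda)$ is eventually $0$ (once $\lambda\le f^j(0)$), so $g(\lambda)\to 0$; as $\lambda\to+\infty$ each $g^j(\lambda)$ is eventually $1$ (once $\lambda> f^j(1)$), so $g(\lambda)\to J$. Because $1\in[0,J]$ and $g$ is continuous, the intermediate value theorem furnishes a $\lambda$ with $g(\lambda)=1$; for $J\ge 2$ this is the usual strict application of the IVT, while for $J=1$ one simply takes any $\lambda>f^1(1)$.

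For the second assertion I would argue directly from the monotonicity of the individual $g^j$. Assume without loss of generality $\lambda_1\le\lambda_2$. Then $g^j(\lambda_1)\le g^j(\lambda_2)$ for every $j$, and summing gives $g(\lambda_1)\le g(\lambda_2)$. If the aggregates coincide, then $\sum_j\big(g^j(\lambda_2)-g^j(\lambda_1)\big)=0$ is a sum of non-negative terms equal to zero, which forces $g^j(\lambda_1)=g^j(\lambda_2)$ for each $j$, as claimed.

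I do not anticipate a genuine obstacle here: the argument is a standard market-clearing existence-and-uniqueness computation. The only point requiring care is confirming that the piecewise definition of $g^j$ glues continuously at $\lambda=f^j(0)$ and $\lambda=f^j(1)$ and that the two limits of $g$ are exactly $0$ and $J$; once these are established, both parts of the claim drop out immediately from the intermediate value theorem and the termwise monotonicity argument above.
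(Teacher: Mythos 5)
Your argument is correct and is exactly the route the paper intends: the paper simply states that Claim~1 ``follows from the intermediate value theorem and the monotonicity of $g^j$ for each $j$,'' and your proposal fills in precisely those details (continuity and monotonicity of each $g^j$, the limits $0$ and $J$ of $g$, the IVT for existence, and the termwise vanishing of a zero sum of non-negative increments for the second assertion). No discrepancy with the paper's approach.
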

\begin{claim}
\label{claim2}
If $Q^*\in\Delta^{J-1}$ and $\lambda\in\mathbb{R}$ satisfy \eqref{eqKKT}, then it is necessary that  $g(\lambda)=1$.
\end{claim}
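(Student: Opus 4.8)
The plan is to prove Claim~\ref{claim2} by establishing the componentwise identity $Q^{j*}=g^j(\lambda)$ for every $j$, after which the conclusion is immediate. Recalling that $g(\lambda)=\sum_{j=1}^J g^j(\lambda)$, once this identity is in hand I would sum over $j$ and invoke $Q^*\in\Delta^{J-1}$, i.e.\ $\sum_j Q^{j*}=1$, to obtain $g(\lambda)=\sum_j g^j(\lambda)=\sum_j Q^{j*}=1$. The verification of the componentwise identity splits according to whether each route is used at equilibrium.

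First I would treat an \emph{active} index $j$, meaning $Q^{j*}>0$. For such $j$ the stationarity condition \eqref{eqKKT1} gives $f^j(Q^{j*})=\lambda$. Because $f^j$ is continuous and strictly increasing on $[0,1]$ (as recorded before \eqref{eqNEcond}) and $Q^{j*}\in(0,1]$, strict monotonicity forces $f^j(0)<f^j(Q^{j*})=\lambda\leq f^j(1)$, so $\lambda$ lies in the middle branch of the definition \eqref{eqgdef} of $g^j$. Applying the single-valued inverse there gives $g^j(\lambda)=(f^j)^{-1}(\lambda)=Q^{j*}$; the extreme case $Q^{j*}=1$ is covered by the same branch, since then $\lambda=f^j(1)$ still satisfies the branch condition $f^j(0)<\lambda\leq f^j(1)$.

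Next I would treat an \emph{inactive} index $j$, meaning $Q^{j*}=0$. Here the complementary condition \eqref{eqKKT2} reads $f^j(0)\geq\lambda$, i.e.\ $\lambda\leq f^j(0)$, which places $\lambda$ in the first branch of \eqref{eqgdef} and hence gives $g^j(\lambda)=0=Q^{j*}$. Together the two cases establish $Q^{j*}=g^j(\lambda)$ for all $j$, and the summation step described above then completes the argument.

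I do not anticipate a genuine obstacle here: the statement is a clean case analysis rather than a hard estimate, and it relies only on the continuity and strict monotonicity of $f^j$ already available. The single point demanding care is the boundary bookkeeping in the piecewise definition of $g^j$—verifying that the threshold values $\lambda=f^j(0)$ and $\lambda=f^j(1)$, and in particular the active case with $Q^{j*}=1$, fall into the intended branches, and that strict monotonicity is what guarantees a single-valued inverse $(f^j)^{-1}$ so that the middle branch is well posed. This boundary care is precisely where the monotonicity and continuity hypotheses on $f^j$ become load-bearing.
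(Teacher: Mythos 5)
Your proof is correct and follows essentially the same route as the paper: both arguments reduce to the componentwise facts that $g^j(\lambda)=Q^{j*}$ for active $j$ (via \eqref{eqKKT1} and the middle branch of \eqref{eqgdef}) and $g^j(\lambda)=0=Q^{j*}$ for inactive $j$ (via \eqref{eqKKT2}), followed by summation against $\sum_j Q^{j*}=1$. The paper merely packages this as a contradiction ($g(\lambda)\neq 1$ would force $\sum_j Q^{j*}\neq 1$), whereas you argue directly and spell out the boundary bookkeeping more carefully; the content is the same.
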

\begin{proof}
Assume \eqref{eqKKT} and $g(\lambda)\neq 1$. Then
\begin{subequations}
\label{eqgchain}
\begin{align}
\sum\nolimits_j Q^{j*} &= \sum\nolimits_{j: Q^{j*}>0} Q^{j*} \label{eqgchain1}\\
&= \sum\nolimits_{j: Q^{j*}>0} g^j(\lambda) \label{eqgchain2}\\
&= \sum\nolimits_{j: Q^{j*}>0} g^j(\lambda) + \sum\nolimits_{j: Q^{j*}=0} g^j(\lambda) \label{eqgchain3} \\
&= \sum\nolimits_{j} g^j(\lambda) \\ 
&= g(\lambda)\neq 1.
\end{align}
\end{subequations}
Equality \eqref{eqgchain2} follows from \eqref{eqKKT1} and \eqref{eqgdef}. Equality \eqref{eqgchain3} holds since, for $j$ such that $Q^{j*}=0$, we have $f_N^j(0)\geq \lambda$ by \eqref{eqKKT2} and thus, by definition \eqref{eqgdef}, $g^j(\lambda)=0$. However, the chain of equalities \eqref{eqgchain} is a contradiction to $Q^*\in\Delta^{J-1}$.
\end{proof}

Now, pick any $\lambda\in\mathbb{R}$ such that $g(\lambda)=1$, and construct $Q^*$ by $Q^{j*}=g^j(\lambda)$. By Claim~\ref{claim1}, such a $Q^*$ is unique. It is easy to check that $Q^*\in\Delta^{J-1}$ and the condition \eqref{eqKKT} is satisfied. This construction, together with Claim~\ref{claim2}, shows that there exists a unique $Q^*\in\Delta^{J-1}$ satisfying the condition \eqref{eqKKT}.
\fi

\section*{Acknowledgment}
The authors would like to thank Mr. Matthew T. Morris and Mr. James S. Stanesic  at the University of Texas at Austin for their contributions to the numerical study in Section~\ref{secsimulation}.  The first author also acknowledges valuable discussions with Dr. Tamer Ba\c{s}ar at the University of Illinois at Urbana-Champaign.

\ifCLASSOPTIONcaptionsoff
  \newpage
\fi

\bibliographystyle{IEEEtran}
\bibliography{Refs}

%
%
%
%

\end{document}